\newtheorem{thm}{Theorem}[section]
\newtheorem{prob}[thm]{Problem}
\newtheorem{lem}[thm]{Lemma}
\newtheorem{cor}[thm]{Corollary}
\numberwithin{equation}{section}
\newcommand{\dom}{\operatorname{dom}}
\newcommand{\Sc}{\operatorname{Sc}}
\renewcommand{\Im}{\operatorname{Im}}
\title[Spectral properties of the gradient operator]{Spectral properties of the gradient operator with nonconstant coefficients}
\author[Fabrizio Colombo]{F. Colombo}
\address{(FC) Politecnico di Milano, Dipartimento di Matematica, Via E. Bonardi 9, 20133 Milano, Italy}
\email{fabrizio.colombo@polimi.it}
\author[Francesco Mantovani]{F. Mantovani}
\address{(FM) Politecnico di Milano, Dipartimento di Matematica, Via E. Bonardi 9, 20133 Milano, Italy}
\email{francesco.mantovani@polimi.it}
\author[Peter Schlosser]{P. Schlosser}
\address{(PS) Politecnico di Milano, Dipartimento di Matematica, Via E. Bonardi 9, 20133 Milano, Italy}
\email{pschlosser@math.tugraz.at}
\begin{document}

\begin{abstract}
In mathematical physics, the gradient operator with nonconstant coefficients encompasses various models, including Fourier's law for heat propagation and Fick's first law, that relates the diffusive flux to the gradient of the concentration. Specifically, consider $n\geq 3$ orthogonal unit vectors $e_1,\dots,e_n\in\mathbb{R}^n$, and let $\Omega\subseteq\mathbb{R}^n$ be some (in general unbounded) Lipschitz domain. This paper investigates the spectral properties of the gradient operator $T=\sum_{i=1}^ne_ia_i(x)\frac{\partial}{\partial x_i}$ with nonconstant positive coefficients $a_i:\overline{\Omega}\to(0,\infty)$. Under certain regularity and growth conditions on the $a_i$, we identify bisectorial or strip-type regions that belong to the $S$-resolvent set of $T$. Moreover, we obtain suitable estimates of the associated resolvent operator. Our focus lies in the spectral theory on the $S$-spectrum, designed to study the operators acting in Clifford modules $V$ over the Clifford algebra $\mathbb{R}_n$, with vector operators being a specific crucial subclass. The spectral properties related to the $S$-spectrum of $T$ are linked to the inversion of the operator $Q_s(T):=T^2-2s_0T+|s|^2$, where $s\in\mathbb{R}^{n+1}$ is a paravector, i.e., it is of the form $s=s_0+s_1e_1+\dots+s_ne_n$. This spectral problem is substantially different from the complex one, since it allows to associate general boundary conditions to $Q_s(T)$, i.e., to the squared operator $T^2$.
\end{abstract}

\maketitle

AMS Classification  47A10, 47A60. \medskip

Keywords: Vector operators, $S$-spectrum, $S$-resolvent operator, boundary value problems. \medskip

\textbf{Acknowledgements:} Fabrizio Colombo is supported by MUR grant Dipartimento di Eccellenza 2023-2027. Peter Schlosser was funded by the Austrian Science Fund (FWF) under Grant No. J 4685-N and by the European Union--NextGenerationEU.

\section{Introduction}

Quaternionic and Clifford operators encompass vector operators, which arise in numerous scientific contexts. Contrary to complex holomorphic function theory, there are different possible notions of hyperholomorphicity  in the noncommutative setting of $\mathbb{R}_n$. Noncommutative spectral theories then rely on various concepts of spectra, based on distinct Cauchy kernels depending on the type of hyperholomorphicity being considered. \medskip

The spectral theory, grounded in the notion of $S$-spectrum and associated with slice hyperholomorphicity, initiated its development in 2006, showing that this noncommutative framework is not merely an extension deducible from complex operator theory, especially when dealing with unbounded operators, see \cite{RD}. The identification of the $S$-spectrum emerged through methodologies in hypercomplex analysis, although its roots were initially hinted by quaternionic quantum mechanics, see \cite{BF}. An exhaustive description on the discovery of the $S$-spectrum can be found in the introduction of the book \cite{CGK}. The spectral theory on the $S$-spectrum was developed in several papers and it is systematically organized in the books \cite{ACS2016,AlpayColSab2020,FJBOOK,CGK,ColomboSabadiniStruppa2011}, where one can find also the development of the theory of slice hyperholomorphic functions. It is also important to point out that the quaternionic and the Clifford spectral theorems are based on the notion of $S$-spectrum as well, see \cite{ACK,ColKim}. However, this spectral theory goes beyond the quaternionic and the Clifford setting, as shown in \cite{ADVCGKS}. \medskip

Let us give a short explanation of the differences between complex and $S$-spectral theory. The resolvent operator of a complex linear operator $A$ is directly linked to the definition of its spectrum via the invertibility of the operator $\lambda-A$. In particular, if $A$ is bounded, the inverse of this operator admits the series representation
\begin{equation*}
\sum_{n=0}^\infty A^n\lambda^{-1-n}=(\lambda-A)^{-1},\qquad|\lambda|>\Vert A\Vert.
\end{equation*}
However, this identity is not true for Clifford operators $T$. In this case there rather holds
\begin{subequations}
\begin{align}
\sum_{n=0}^\infty T^ns^{-1-n}=(T^2-2s_0T+|s|^2)^{-1} (\overline{s}-T),\qquad|s|>\Vert T\Vert, \label{Eq_SL_series} \\
\sum_{n=0}^\infty s^{-1-n}T^n=(\overline{s}-T)(T^2-2s_0T+|s|^2)^{-1},\qquad|s|>\Vert T\Vert, \label{Eq_SR_series}
\end{align}
\end{subequations}
for $s\in\mathbb{R}_n$ of the form $s=s_0+s_1e_1+\dots+s_ne_n$. This relation suggests, that in contrast to the complex spectral theory, the definition of the $S$-spectrum is connected to the invertibility of the second order operator
\begin{equation}\label{Eq_Qs}
Q_s(T):=T^2-2s_0T+|s|^2.
\end{equation}
That is, for every $T\in\mathcal{B}(V)$, the \textit{$S$-spectrum} and the \textit{$S$-resolvent set} of $T$ are defined as
\begin{equation*}
\rho_S(T):=\Set{s\in\mathbb{R}^{n+1} | Q_s(T)^{-1}\in\mathcal{B}(V)}\qquad\text{and}\qquad\sigma_S(T):=\mathbb{R}^{n+1}\setminus\rho_S(T).
\end{equation*}
Here $V$ is a Clifford module and $\mathcal{B}(V)$ the set of all bounded right-linear operators $T:V\to V$. For every $s\in\rho_S(T)$, the inverse $Q_s(T)^{-1}$ is called the \textit{pseudo $S$-resolvent operator}. \medskip

A second main difference between complex and $S$-spectral theory, lies in the fact that
\begin{equation*}
\rho_S(T)\ni s\mapsto Q_s(T)^{-1},
\end{equation*}
is not a (operator valued) slice hyperholomorphic function. The operator $Q_s(T)$ is only used for the definition of the $S$-spectrum, while the \textit{left} and the \textit{right $S$-resolvent operators} are motivated by the series representations \eqref{Eq_SL_series} and \eqref{Eq_SR_series}
\begin{equation*}
S_L^{-1}(s,T):=Q_s(T)^{-1}(\overline{s}-T)\qquad\text{and}\qquad S_R^{-1}(s,T):=(\overline{s}-T)Q_s(T)^{-1},\qquad s\in\rho_S(T).
\end{equation*}
These resolvent operators now preserve the slice hyperholomorphicity and are used in the definition of the $S$-functional calculus for Clifford operators. \medskip

Clifford operators that are sectorial, bi-sectorial or strip-operators, can be defined in analogy to complex operators, but in this case the estimates are associated with the $S$-resolvents
\begin{equation*}
\Vert S_L^{-1}(s,T)\Vert\leq\frac{C}{|s|}\qquad\text{and}\qquad\Vert S_R^{-1}(s,T)\Vert\leq\frac{C}{|s|},
\end{equation*}
where the parameter $s$ belongs to suitable sectorial, bi-sectorial or strip-type subsets of $\rho_S(T)$. \medskip

In this paper we will study the spectral properties of the
first order differential operator
\begin{equation}\label{Eq_T}
T=\sum_{i=1}^ne_ia_i(x)\frac{\partial}{\partial x_i},\qquad x\in\Omega,
\end{equation}
with Dirichlet, Neumann and Robin boundary conditions associated to $Q_s(T)$. Here $e_1,\dots,e_n$ are the imaginary units of the Clifford algebra $\mathbb{R}_n$ and the coefficients $a_1,\dots,a_n$ are defined on some minimally smooth domain $\Omega\subseteq\mathbb{R}^n$ in the sense of \cite[Section VI.3.3]{S70}, in $n\geq 3$ dimensions. The restriction to $n\geq 3$ is necessary since the crucial Sobolev inequality in Lemma~\ref{lem_Sobolev_inequality} only holds true in those dimensions. For the coefficients of $T$ in \eqref{Eq_T} we require that $a_1,\dots,a_n:\overline{\Omega}\mapsto(0,\infty)$ belong to $C^1(\overline{\Omega})$ and satisfy the bounds
\begin{subequations}\label{Eq_Coefficient_bounds}
\begin{align}
m_a:=&\min\limits_{i\in\{1,\dots,n\}}\inf\limits_{x\in\Omega}a_i(x)>0, \label{Eq_ma} \\
M_a:=&\Big(\sum\nolimits_{i=1}^n\Vert a_i\Vert_\infty^2\Big)^{\frac{1}{2}}<\infty, \label{Eq_Ma} \\
M_a':=&\Big(\sum\nolimits_{i,j=1}^n\Big\Vert a_j\frac{\partial a_i}{\partial x_j}\Big\Vert^2_{L^n}\Big)^{\frac{1}{2}}<\infty, \label{Eq_Maprime}
\end{align}
\end{subequations}
where $\Vert\cdot\Vert_{L^n}$ is the usual Lebesgue norm of real valued functions and $\Vert\cdot\Vert_\infty$ is the supremum norm of continuous functions. The Clifford module in which we will be working is
\begin{equation*}
L^2(\Omega)=\Set{f:\Omega\to\mathbb{R}_n | \int_\Omega|u(x)|^2dx<\infty},
\end{equation*}
that is the space of square integrable functions with values in the Clifford algebra $\mathbb{R}_n$. \medskip

In this paper we will interpret the invertibility of the operator $Q_s(T)$ in \eqref{Eq_Qs} in the weak sense. That is, we ask whether the partial differential equation
\begin{equation}\label{Eq_Qs_spectral_problem}
Q_s(T)u(x)=(T^2-2s_0T+|s|^2)u(x)=f(x),\qquad x\in\Omega,
\end{equation}
equipped with boundary conditions of Dirichlet, Neumann or Robin type, admits a unique weak solution for every $f\in L^2(\Omega)$. In order to derive this weak formulation, we first note that the square of the operator $T$ in \eqref{Eq_T} is given by
\begin{align}
T^2&=\sum_{i,j=1}^ne_ja_j\frac{\partial}{\partial x_j}\Big(e_ia_i\frac{\partial}{\partial x_i}\Big) \notag \\
&=-\sum_{i=1}^na_i\frac{\partial}{\partial x_i}\Big(a_i\frac{\partial}{\partial x_i}\Big)+\sum_{i,j=1,j\neq i}^ne_je_ia_j\frac{\partial a_i}{\partial x_j}\frac{\partial}{\partial x_i}+\underbrace{\sum_{i,j=1,j\neq i}^ne_je_ia_ja_i\frac{\partial^2}{\partial x_j\partial x_i}}_{\makebox[0pt]{=0}} \notag \\[-0.6cm]
&=-\sum_{i=1}^na_i\frac{\partial}{\partial x_i}\Big(a_i\frac{\partial}{\partial x_i}\Big)-\sum_{i=1}^ne_i\Big(B_i-e_ia_i\frac{\partial a_i}{\partial x_i}\Big)\frac{\partial}{\partial x_i}, \label{Eq_T2}
\end{align}
where the third sum in the second line vanishes due to anti-commutation property of the units of the Clifford algebra $e_je_i=-e_ie_j$ for every $j\neq i$, and in the last line we used the abbreviation
\begin{equation}\label{Eq_Bi}
B_i:=\sum_{j=1}^ne_ja_j\frac{\partial a_i}{\partial x_j},\qquad i\in\{1,\dots,n\}.
\end{equation}
Applying this operator to some $u\in H^2(\Omega)$, testing with $v\in H^1(\Omega)$ and using integration by parts with the outer unit normal vector $\vec{\nu}=(\nu_1,\dots,\nu_n)$ of the boundary $\partial\Omega$, gives
\begin{align*}
\langle T^2u,v\rangle_{L^2}&=-\sum_{i=1}^n\Big\langle a_i\frac{\partial}{\partial x_i}\Big(a_i\frac{\partial u}{\partial x_i}\Big),v\Big\rangle_{L^2}-\sum_{i=1}^n\Big\langle e_i\Big(B_i-e_ia_i\frac{\partial a_i}{\partial x_i}\Big)\frac{\partial u}{\partial x_i},v\Big\rangle_{L^2} \\
&=\sum_{i=1}^n\Big\langle a_i\frac{\partial u}{\partial x_i},\frac{\partial}{\partial x_i}(a_iv)\Big\rangle_{L^2}-\sum_{i=1}^n\Big\langle\nu_ia_i\frac{\partial u}{\partial x_i},a_iv\Big\rangle_{L^2(\partial\Omega)} \\
&\quad-\sum_{i=1}^n\Big\langle\frac{\partial u}{\partial x_i},\Big(B_i-e_ia_i\frac{\partial a_i}{\partial x_i}\Big)e_iv\Big\rangle_{L^2} \\
&=\sum_{i=1}^n\Big\langle\frac{\partial u}{\partial x_i},a_i^2\frac{\partial v}{\partial x_i}-B_ie_iv\Big\rangle_{L^2}-\sum_{i=1}^n\Big\langle \nu_i\frac{\partial u}{\partial x_i},a_i^2v\Big\rangle_{L^2(\partial\Omega)}.
\end{align*}
Consequently, the weak formulation of the problem \eqref{Eq_Qs_spectral_problem} is connected to the form
\begin{equation}\label{Eq_qs_formal}
q_s(u,v):=\sum_{i=1}^n\Big\langle\frac{\partial u}{\partial x_i},a_i^2\frac{\partial v}{\partial x_i}+(2s_0a_i-B_i)e_iv\Big\rangle_{L^2}+|s|^2\langle u,v\rangle_{L^2}-\sum_{i=1}^n\Big\langle\nu_i\frac{\partial u}{\partial x_i},a_i^2v\Big\rangle_{L^2(\partial\Omega)}.
\end{equation}
The primary objective of this paper is to delve into this spectral problem with Dirichlet, Neumann and Robin-type boundary conditions, establishing bi-sectorial estimates for the solution and hence the $S$-resolvent operator. These findings are an extension of the quaternionic framework explored in prior publications such as \cite{BARACCO,ColomboDenizPinton2020,ColomboDenizPinton2021,ColomboPelosoPinton2019}, and in some cases contains results never investigated in previous works. \medskip

The structure of the paper is outlined as follows: In Section~\ref{sec_Lax_Milgram}, we present the main results concerning modules over the Clifford algebra $\mathbb{R}_n$. Among those is the Lax-Milgram lemma, which will be crucial for the solvability of the form \eqref{Eq_qs_formal} in Section~\ref{sec_Dirichlet} and Section~\ref{sec_Robin}. Section~\ref{sec_Dirichlet} focuses on the $S$-spectral problem \eqref{Eq_Qs_spectral_problem} with Dirichlet boundary conditions $u=0$ on $\partial\Omega$, and similarly, Section~\ref{sec_Robin} investigates Robin-type boundary conditions $\sum_{i=1}^n\nu_ia_i\frac{\partial u}{\partial x_i}+bu=0$ on $\partial\Omega$. Additional to the unique weak solvability, for both kinds of boundary conditions, $L^2$- and $H^1$-estimates of the solution are established. Consequently, sectoriality estimates of the resolvent operators can be deduced. For the readability of the paper, an auxiliary maximization problem is put in Section \ref{sec_Maximization_problem}. The final Section \ref{sec_Concluding_remarks} contains concluding remarks on the current state of this theory.

\section{Functional setting in Clifford modules over $\mathbb{R}_n$}\label{sec_Lax_Milgram}

In this section we consider for $n\in\mathbb{N}$, $n\geq 3$ the \textit{Clifford algebra} $\mathbb{R}_n$, generated by $n$ \textit{imaginary units} $e_1,\dots,e_n$ which satisfy the relations
\begin{equation*}
e_i^2=-1\qquad\text{and}\qquad e_ie_j=-e_je_i,\qquad i\neq j\in\{1,\dots,n\}.
\end{equation*}
More precisely, $\mathbb{R}_n$ is given by
\begin{equation}
\mathbb{R}_n:=\Set{\sum\nolimits_{A\in\mathcal{A}}x_Ae_A | x_A\in\mathbb{R},\,A\in\mathcal{A}},
\end{equation}
using the index set
\begin{equation*}
\mathcal{A}:=\Set{(i_1,\dots,i_r) | r\in\{0,\dots,n\},\,1\leq i_1<\dots<i_r\leq n},
\end{equation*}
and the \textit{basis vectors} $e_A:=e_{i_1}\dots e_{i_r}$. Note, for $A=\emptyset$ the empty product of imaginary units is the real number $e_\emptyset:=1$. Moreover, we will consider the set of all \textit{paravectors}
\begin{equation*}
\mathbb{R}^{n+1}:=\Set{x_0+\sum\nolimits_{i=1}^nx_ie_i | x_0,x_1,\dots,x_n\in\mathbb{R}}.
\end{equation*}
For any Clifford number $x\in\mathbb{R}_n$, we define
\begin{align*}
\Sc(x)&:=x_\emptyset=x_0, && \textit{(scalar part)} \\
\overline{x}&:=\sum\nolimits_{A\in\mathcal{A}}x_A\overline{e_A}, && \textit{(conjugate)} \\
|x|&:=\Big(\sum\nolimits_{A\in\mathcal{A}}|x_A|^2\Big)^{\frac{1}{2}}=(\Sc(x\overline{x}))^{\frac{1}{2}}=(\Sc(\overline{x}x))^{\frac{1}{2}}, && \textit{(norm)}
\end{align*}
where $\overline{e_A}=\overline{e_{i_r}}\dots\overline{e_{i_1}}$ and $\overline{e_i}=-e_i$. It is now obvious that for any Clifford number $x\in\mathbb{R}_n$ one can calculate its coefficients $x_A$ by
\begin{equation}\label{Eq_xA}
\Sc(x\overline{e_A})=\Sc\Big(\sum\nolimits_{B\in\mathcal{A}}x_Be_B\overline{e_A}\Big)=\sum\nolimits_{B\in\mathcal{A}}x_B\Sc(e_B\overline{e_A})=x_A,
\end{equation}
where in the last equation we used that
\begin{equation}\label{Eq_SceBeA}
\Sc(e_B\overline{e_A})=\begin{cases} 1, & \text{if }B=A, \\ 0, & \text{if }B\neq A. \end{cases}
\end{equation}
For any real Hilbert space $V_\mathbb{R}$ with inner product $\langle\cdot,\cdot\rangle_\mathbb{R}$ and norm $\Vert\cdot\Vert_\mathbb{R}^2=\langle\cdot,\cdot\rangle_\mathbb{R}$, we define the \textit{Clifford module}
\begin{equation*}
V:=\Set{\sum\nolimits_{A\in\mathcal{A}}v_A\otimes e_A | v_A\in V_\mathbb{R}}.
\end{equation*}
For any vector $v=\sum_{A\in\mathcal{A}}v_A\otimes e_A\in V$ and any Clifford number $x=\sum_{A\in\mathcal{A}}x_Ae_A\in\mathbb{R}_n$, we equip this space with a left and a right scalar multiplication
\begin{subequations}
\begin{align}
xv:=&\sum\nolimits_{A,B\in\mathcal{A}}(x_Bv_A)\otimes(e_Be_A), && \textit{(left-multiplication)} \\
vx:=&\sum\nolimits_{A,B\in\mathcal{A}}(v_Ax_B)\otimes(e_Ae_B). && \textit{(right-multiplication)} \label{Eq_Right_multiplication}
\end{align}
\end{subequations}
Moreover, we define the \textit{inner product}
\begin{equation}\label{Eq_Inner_product}
\langle v,w\rangle:=\sum\nolimits_{A,B\in\mathcal{A}}\langle v_A,w_B\rangle_\mathbb{R}\,\overline{e_A}e_B,\qquad v,w\in V,
\end{equation}
and the \textit{norm}
\begin{equation}\label{Eq_Norm}
\Vert v\Vert:=\Big(\sum\nolimits_{A\in\mathcal{A}}\Vert v_A\Vert_\mathbb{R}^2\Big)^{\frac{1}{2}}=\big(\Sc\langle v,v\rangle\big)^{\frac{1}{2}},\qquad v\in V.
\end{equation}
The sesquilinear form \eqref{Eq_Inner_product} is clearly right-linear in the second, and right-antilinear in the first argument, i.e. for every $u,v,w\in V$, $x\in\mathbb{R}_n$, there holds
\begin{align*}
\langle u,v+w\rangle&=\langle u,v\rangle+\langle u,w\rangle, && \langle v,wx\rangle=\langle v,w\rangle x, \\
\langle v+w,u\rangle&=\langle v,u\rangle+\langle w,u\rangle, && \langle vx,w\rangle=\overline{x}\langle v,w\rangle.
\end{align*}
Moreover, there also holds
\begin{equation}\label{Eq_Inner_product_property}
\langle v,xw\rangle=\langle\overline{x}v,w\rangle.
\end{equation}
Next we provide some more basic properties of the inner product \eqref{Eq_Inner_product} and the norm \eqref{Eq_Norm}.

\begin{lem}\label{lem_Properties}
For every $v,w\in V$, $x\in\mathbb{R}_n$, there holds

\begin{enumerate}
\item[i)] $\Vert vx\Vert\leq 2^{\frac{n}{2}}|x|\Vert v\Vert$\qquad and\qquad$\Vert xv\Vert\leq 2^{\frac{n}{2}}|x|\Vert v\Vert$,
\item[ii)] $\Vert vx\Vert=\Vert xv\Vert=|x|\Vert v\Vert$,\qquad if $x\in\mathbb{R}^{n+1}$,
\item[iii)] $|\langle v,w\rangle|\leq 2^{\frac{n}{2}}\Vert v\Vert\,\Vert w\Vert$,
\item[iv)] $|\Sc\langle v,w\rangle|\leq\Vert v\Vert\,\Vert w\Vert$.
\end{enumerate}
\end{lem}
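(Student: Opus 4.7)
The plan is to prove the four items relying on two elementary observations on the basis $\{e_A\}_{A\in\mathcal{A}}$ of $\mathbb{R}_n$ (whose cardinality is $2^n$): (a) for every fixed $A\in\mathcal{A}$, left multiplication by $e_A$ (or by $\bar e_A$) is a signed permutation of the basis, so for each $C\in\mathcal{A}$ there is a unique $B=B(A,C)\in\mathcal{A}$ and sign $\eta(A,C)\in\{\pm 1\}$ with $\bar e_A e_B=\eta(A,C)e_C$; (b) \eqref{Eq_SceBeA} together with bilinearity implies the trace identity $\Sc(ab)=\Sc(ba)$ for all $a,b\in\mathbb{R}_n$. Item (iv) follows directly from (b): in the defining sum \eqref{Eq_Inner_product} only the diagonal terms $A=B$ survive after taking the scalar part, giving $\Sc\langle v,w\rangle=\sum_A\langle v_A,w_A\rangle_\mathbb{R}$, and two standard Cauchy-Schwarz applications in $V_\mathbb{R}$ then yield $|\Sc\langle v,w\rangle|\leq\Vert v\Vert\,\Vert w\Vert$.

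For (iii), observation (a) lets me write the $e_C$-component of $\langle v,w\rangle$ as $(\langle v,w\rangle)_C=\sum_A\eta(A,C)\langle v_A,w_{B(A,C)}\rangle_\mathbb{R}$; the triangle inequality and Cauchy-Schwarz in $V_\mathbb{R}$, combined with the bijectivity of $B(\cdot,C):\mathcal{A}\to\mathcal{A}$, give $|(\langle v,w\rangle)_C|\leq\Vert v\Vert\,\Vert w\Vert$, and summing over the $2^n$ indices $C$ produces the factor $2^{n/2}$. Item (i) follows by the same template with $(vx)_C=\sum_A\eta(A,C)\,v_A\,x_{B(A,C)}$ and with $\sum_A|x_{B(A,C)}|^2=|x|^2$ playing the role of $\Vert w\Vert^2$; the case $\Vert xv\Vert$ is symmetric.

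Finally, for (ii) a direct expansion using $e_ie_j=-e_je_i$ yields $\bar x x=x\bar x=|x|^2$ for paravectors $x$. Applying \eqref{Eq_Inner_product_property} with $u=xv$ and $w=v$ then gives $\langle xv,xv\rangle=\langle \bar x x\,v,v\rangle=|x|^2\langle v,v\rangle$, hence $\Vert xv\Vert^2=|x|^2\Vert v\Vert^2$; for the right multiplication, applying the right-linearity/antilinearity identities twice gives $\langle vx,vx\rangle=\bar x\langle v,v\rangle x$, and the trace identity combined with $x\bar x=|x|^2$ reduces $\Sc\langle vx,vx\rangle$ to $\Sc(|x|^2\langle v,v\rangle)=|x|^2\Vert v\Vert^2$. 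The only real obstacle is the combinatorial bookkeeping of the signs $\eta(A,C)$ and the bijections $B(A,C)$ in (i) and (iii); once these are tracked, every estimate is routine Cauchy-Schwarz, and the factor $2^{n/2}$ is genuinely unavoidable here, since in the worst case all $2^n$ Clifford components of $\langle v,w\rangle$ or of $vx$ may be simultaneously of comparable size.
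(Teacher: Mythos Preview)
Your proof is correct and follows essentially the same route as the paper: component-wise Cauchy--Schwarz in $V_\mathbb{R}$ for (i), (iii), (iv), and the paravector identity $x\overline{x}=|x|^2$ together with $\Sc(ab)=\Sc(ba)$ for (ii). The one organizational difference is in (iii): you expand $(\langle v,w\rangle)_C$ directly via the signed-permutation bijection $A\mapsto B(A,C)$, whereas the paper writes $\langle v,w\rangle_A=\Sc\langle v,w\,\overline{e_A}\rangle$ and then invokes (iv) together with $\Vert w\,\overline{e_A}\Vert=\Vert w\Vert$, which avoids the explicit bookkeeping of the bijection at the cost of a small forward reference.
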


\begin{proof}
\textit{i)}\;\;Analogously to \eqref{Eq_xA}, one can write for every $A\in\mathcal{A}$ the component $(vx)_A$ as
\begin{equation*}
(vx)_A=\Sc(vx\overline{e_A})=\Sc\bigg(\sum\nolimits_{B,C\in\mathcal{A}}(v_B(x\overline{e_A})_C)\otimes(e_Be_C)\bigg)=\sum\nolimits_{B\in\mathcal{A}}(-1)^{\frac{|B|(|B|+1)}{2}}v_B(x\overline{e_A})_B,
\end{equation*}
where in the last equation we used that $e_Be_C\in\mathbb{R}$ if and only if $B=C$. In this form, we can estimate every component of the product as
\begin{align*}
\Vert(vx)_A\Vert_\mathbb{R}&\leq\sum\nolimits_{B\in\mathcal{A}}\Vert v_B(x\overline{e_A})_B\Vert_\mathbb{R}=\sum\nolimits_{B\in\mathcal{A}}\Vert v_B\Vert_\mathbb{R}|(x\overline{e_A})_B| \\
&\leq\Big(\sum\nolimits_{B\in\mathcal{A}}\Vert v_B\Vert^2_\mathbb{R}\Big)^{\frac{1}{2}}\Big(\sum\nolimits_{B\in\mathcal{A}}|(x\overline{e_A})_B|^2\Big)^{\frac{1}{2}}=\Vert v\Vert\,|x\overline{e_A}|=\Vert v\Vert\,|x|.
\end{align*}
Consequently the norm of the whole product admits the upper bound
\begin{equation*}
\Vert vx\Vert^2=\sum\nolimits_{A\in\mathcal{A}}\Vert(vx)_A\Vert_\mathbb{R}^2\leq\sum\nolimits_{A\in\mathcal{A}}\Vert v\Vert^2|x|^2=2^n\Vert v\Vert^2|x|^2.
\end{equation*}
The second inequality $\Vert xv\Vert^2\leq 2^n\Vert v\Vert^2|x|^2$ can be proven in the same way. \medskip

\textit{ii)}\;\;If we assume that $x$ is a paravector, it clearly satisfies $x\overline{x}=|x|^2$, and we have
\begin{equation*}
\Vert vx\Vert^2=\Sc\langle vx,vx\rangle=\Sc\big(\overline{x}\langle v,v\rangle x\big)=\Sc\big(x\overline{x}\langle v,v\rangle\big)=|x|^2\Sc\langle v,v\rangle=|x|^2\Vert v\Vert^2.
\end{equation*}
For the second identity, we use \eqref{Eq_Inner_product_property} and get
\begin{equation*}
\Vert xv\Vert^2=\Sc\langle xv,xv\rangle=\Sc\langle\overline{x}xv,v\rangle=|x|^2\Sc\langle v,v\rangle=|x|^2\Vert v\Vert^2.
\end{equation*}
\textit{iii)}\;\;Using the representation \eqref{Eq_xA} for the Clifford number $\langle v,w\rangle$ as well as the already proven results i) and ii), gives
\begin{equation*}
|\langle v,w\rangle|^2=\sum\nolimits_{A\in\mathcal{A}}(\langle v,w\rangle_A)^2=\sum\nolimits_{A\in\mathcal{A}}\big(\Sc\langle v,w\,\overline{e_A}\rangle\big)^2\leq\sum\nolimits_{A\in\mathcal{A}}\Vert v\Vert^2\Vert w\,\overline{e_A}\Vert^2=2^n\Vert v\Vert^2\Vert w\Vert^2.
\end{equation*}
\textit{iv)}\;\;It follows from \eqref{Eq_SceBeA} and the Cauchy-Schwarz inequality of $\langle\cdot,\cdot\rangle_\mathbb{R}$, that
\begin{align*}
|\Sc\langle u,v\rangle|&=\Big|\Sc\sum\nolimits_{A,B\in\mathcal{A}}\langle v_A,w_B\rangle_\mathbb{R}\overline{e_A}e_B\Big|=\Big|\sum\nolimits_{A\in\mathcal{A}}\langle v_A,w_A\rangle_\mathbb{R}\Big|\leq\sum\nolimits_{A\in\mathcal{A}}\Vert v_A\Vert_\mathbb{R}\Vert w_A\Vert_\mathbb{R} \\
&\leq\Big(\sum\nolimits_{A\in\mathcal{A}}\Vert v_A\Vert_\mathbb{R}^2\Big)^{\frac{1}{2}}\Big(\sum\nolimits_{A\in\mathcal{A}}\Vert w_A\Vert_\mathbb{R}^2\Big)^{\frac{1}{2}}=\Vert v\Vert\,\Vert w\Vert. \qedhere
\end{align*}
\end{proof}

Next, we come to the main result of this first section, the Lax-Milgram lemma for Clifford modules. We will need this result in the later sections to proof unique solvability of the spectral problem for the $S$-spectrum, in particular to detect regions of $S$-resolvent set and related estimates of the $S$-resolvent operator. Although the  Lax-Milgram lemma is a well known result in complex functional analysis, for its Clifford algebra version, we cannot provide a precise reference, hence we give a short proof.

\begin{lem}[Lax-Milgram lemma]\label{lem_Lax_Milgram}
Let $q:V\times V\to\mathbb{R}_n$ be right-linear in the second and right-antilinear in the first argument. Moreover, we assume that there exists constants $C\geq 0$ and $\kappa>0$, such that
\begin{equation*}
\Sc q(v,v)\geq\kappa\Vert v\Vert^2\qquad\text{and}\qquad|q(v,w)|\leq C\Vert v\Vert\Vert w\Vert,\qquad v,w\in V.
\end{equation*}
Then for every right-linear bounded functional $\varphi:V\to\mathbb{R}_n$, there exists a unique vector $u_\varphi\in V$, satisfying
\begin{equation*}
\Vert u_\varphi\Vert\leq\frac{1}{\kappa}\Vert\Sc\varphi\Vert\qquad\text{and}\qquad q(u_\varphi,v)=\varphi(v),\qquad\text{for all }v\in V.
\end{equation*}
\end{lem}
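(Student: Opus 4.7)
The plan is to reduce the statement to the classical real Lax--Milgram lemma by taking scalar parts, and then to recover the full Clifford identity from the right-linearity of $q$ and $\varphi$. The key observation is that, by the definition of the norm in \eqref{Eq_Norm}, the Clifford module $V$ becomes a real Hilbert space when endowed with the real inner product $\langle v,w\rangle_{\mathbb{R}}:=\Sc\langle v,w\rangle$, whose induced norm coincides with $\Vert\cdot\Vert$; completeness is inherited from $V_\mathbb{R}$ via the direct-sum decomposition indexed by $\mathcal{A}$.

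First I would define the real bilinear form $Q(v,w):=\Sc\,q(v,w)$ on $V\times V$. The right-linearity of $q$ in the second argument and right-antilinearity in the first immediately yield $\mathbb{R}$-bilinearity of $Q$; the trivial estimate $|\Sc\,x|\leq|x|$ together with the boundedness hypothesis gives $|Q(v,w)|\leq C\Vert v\Vert\Vert w\Vert$; and the coercivity $Q(v,v)\geq\kappa\Vert v\Vert^2$ is the hypothesis itself. Similarly, $\Phi(v):=\Sc\,\varphi(v)$ is a bounded real-linear functional on $V$ whose real-functional norm I identify with $\Vert\Sc\varphi\Vert$. Applying the classical real Lax--Milgram lemma on $V$ then yields a unique $u_\varphi\in V$ with $Q(u_\varphi,v)=\Phi(v)$ for every $v\in V$, together with the estimate $\Vert u_\varphi\Vert\leq\tfrac{1}{\kappa}\Vert\Sc\varphi\Vert$.

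The only genuinely Clifford-specific step, and thus the main obstacle, is upgrading the scalar identity $\Sc\,q(u_\varphi,v)=\Sc\,\varphi(v)$ to the full Clifford identity $q(u_\varphi,v)=\varphi(v)$. For each multi-index $A\in\mathcal{A}$, the map $v\mapsto v\overline{e_A}$ is a bijection of $V$, so I may test against $v\overline{e_A}$ in place of $v$; using right-linearity of $q$ in its second slot and of $\varphi$ gives
\[\Sc\bigl(q(u_\varphi,v)\,\overline{e_A}\bigr)=\Sc\bigl(\varphi(v)\,\overline{e_A}\bigr),\]
which by the component formula \eqref{Eq_xA} is precisely the equality of the $A$-components of $q(u_\varphi,v)$ and $\varphi(v)$. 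Letting $A$ range over $\mathcal{A}$ yields the desired identity. Uniqueness in the Clifford sense follows at once: if $u_1,u_2$ both satisfy the identity, then $q(u_1-u_2,v)=0$ for every $v\in V$, and choosing $v=u_1-u_2$ and applying coercivity forces $u_1=u_2$.
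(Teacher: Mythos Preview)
Your proof is correct and follows essentially the same route as the paper: reduce to the classical real Lax--Milgram lemma via $\Sc q$ and $\Sc\varphi$ on the real Hilbert space $(V,\Sc\langle\cdot,\cdot\rangle)$, then recover the full Clifford identity by testing with $v\overline{e_A}$ and invoking the component formula \eqref{Eq_xA}. The paper's proof is organized identically, the only cosmetic difference being that it writes the final step as a single sum over $A\in\mathcal{A}$ rather than arguing componentwise.
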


\begin{proof}
Note that it is possible to consider $V$ together with the inner product $\Sc\langle\cdot,\cdot\rangle$ as a real Hilbert space, by simply restricting the scalar multiplication to real numbers. In this sense, the scalar part $\Sc q(\cdot,\cdot):V\times V\to\mathbb{R}$ turns out to be a real bilinear mapping, which satisfies for every $v,w\in V$ the estimates
\begin{equation*}
\Sc q(v,v)\geq\kappa\Vert v\Vert^2\qquad\text{and}\qquad|\Sc q(v,w)|\leq|q(v,w)|\leq C\Vert v\Vert\,\Vert w\Vert.
\end{equation*}
Since also $\Sc\varphi:V\to\mathbb{R}$ is a linear and bounded functional, the classical Lax-Milgram lemma ensures the existence of a unique $u_\varphi\in V$, satisfying
\begin{equation*}
\Vert u_\varphi\Vert\leq\frac{1}{\kappa}\Vert\Sc\varphi\Vert\qquad\text{and}\qquad\Sc q(u_\varphi,v)=\Sc\varphi(v),\qquad\text{for all }v\in V.
\end{equation*}
Using now the components $\varphi(v)_A=\Sc\varphi(v\overline{e_A})$ and $q(u_\varphi,v)_A=\Sc q(u_\varphi,v\overline{e_A})$, see \eqref{Eq_xA}, we can extend this representation result to
\begin{equation*}
q(u_\varphi,v)=\sum\nolimits_{A\in\mathcal{A}}\Sc q(u_\varphi,v\overline{e_A})e_A=\sum\nolimits_{A\in\mathcal{A}}\Sc\varphi(v\overline{e_A})e_A=\varphi(v),\qquad\text{for all }v\in V. \qedhere
\end{equation*}
\end{proof}

We will end this section with two basic lemmas, which will be crucial throughout the paper, namely the Sobolev and the Poincar\'e inequality.
In order to state these inequalities we will need for every function $u\in H^1(\Omega)$ the \textit{Sobolev-seminorm}
\begin{equation*}
\Vert u\Vert_D:=\bigg(\sum_{i=1}^n\Big\Vert\frac{\partial u}{\partial x_i}\Big\Vert_{L^2}^2\bigg)^{1/2}.
\end{equation*}

\begin{lem}[Sobolev inequality]\label{lem_Sobolev_inequality}
Let $n\geq 3$ and $\Omega\subseteq\mathbb{R}^n$ be a minimally smooth domain in the sense of \cite[Section VI.3.3]{S70}. Then there exists a constant $C_S\geq 0$, such that
\begin{subequations}
\begin{align}
\Vert u\Vert_{L^{\frac{2n}{n-2}}}&\leq C_S\Vert u\Vert_D,\hspace{0.95cm}u\in H_0^1(\Omega), \label{Eq_Sobolev_inequality_H01} \\
\Vert u\Vert_{L^{\frac{2n}{n-2}}}&\leq C_S\Vert u\Vert_{H^1},\qquad u\in H^1(\Omega). \label{Eq_Sobolev_inequality_H1}
\end{align}
\end{subequations}
In \eqref{Eq_Sobolev_inequality_H01} the constant is explicitly given by $C_S=\frac{1}{\sqrt{\pi n(n-2)}}\big(\frac{\Gamma(n)}{\Gamma(\frac{n}{2})}\big)^{\frac{1}{n}}$.
\end{lem}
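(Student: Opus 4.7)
The plan is to reduce both inequalities to the classical Gagliardo-Nirenberg-Sobolev inequality on all of $\mathbb{R}^n$, where the sharp constant $C_S=\frac{1}{\sqrt{\pi n(n-2)}}\bigl(\frac{\Gamma(n)}{\Gamma(n/2)}\bigr)^{1/n}$ due to Talenti and Aubin is available. More precisely, I would invoke the well-known statement that for every $u\in C_c^\infty(\mathbb{R}^n)$ one has $\|u\|_{L^{2n/(n-2)}(\mathbb{R}^n)}\leq C_S\|\nabla u\|_{L^2(\mathbb{R}^n)}$, and then extend it to $H^1(\mathbb{R}^n)$ by a density argument.

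For \eqref{Eq_Sobolev_inequality_H01}, the key observation is that any $u\in H_0^1(\Omega)$ can be extended by zero to a function $\tilde{u}\in H^1(\mathbb{R}^n)$ (this is standard for $H_0^1$, independently of the regularity of $\partial\Omega$), and the extension preserves both $\|u\|_{L^{2n/(n-2)}}$ and $\|u\|_D$. Applying the sharp $\mathbb{R}^n$-inequality to $\tilde u$ and restricting back to $\Omega$ yields \eqref{Eq_Sobolev_inequality_H01} with the explicit constant.

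For \eqref{Eq_Sobolev_inequality_H1}, the extension by zero is no longer in $H^1(\mathbb{R}^n)$, so I would instead apply Stein's universal extension operator for minimally smooth domains, precisely the construction developed in \cite[Section VI.3.3]{S70}. This gives a bounded linear operator $E:H^1(\Omega)\to H^1(\mathbb{R}^n)$ with $Eu|_\Omega=u$ and $\|Eu\|_{H^1(\mathbb{R}^n)}\leq C_E\|u\|_{H^1(\Omega)}$ for a constant $C_E$ depending only on $\Omega$. Then I would estimate
\begin{equation*}
\|u\|_{L^{2n/(n-2)}(\Omega)}\leq\|Eu\|_{L^{2n/(n-2)}(\mathbb{R}^n)}\leq C_S\|\nabla Eu\|_{L^2(\mathbb{R}^n)}\leq C_SC_E\|u\|_{H^1(\Omega)},
\end{equation*}
after absorbing $C_E$ into $C_S$ (now no longer sharp, which explains why the explicit value of $C_S$ is only claimed in \eqref{Eq_Sobolev_inequality_H01}).

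The main obstacle is not a real difficulty but a reference issue: one has to trust Stein's extension result in the minimally smooth setting for \eqref{Eq_Sobolev_inequality_H1}, and the sharp Talenti-Aubin constant for \eqref{Eq_Sobolev_inequality_H01}. Accordingly, I expect the proof in the paper to be short and essentially bibliographical, simply pointing to these two classical results and remarking that for $H_0^1$ no extension theorem is needed so the sharp constant survives, whereas for $H^1$ the constant is inflated by the operator norm of Stein's extension.
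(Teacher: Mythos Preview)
Your proposal is correct and follows essentially the same route as the paper: the paper also starts from the sharp Gagliardo--Nirenberg--Sobolev inequality on $\mathbb{R}^n$ with the Talenti constant, obtains \eqref{Eq_Sobolev_inequality_H01} by zero-extension of $u\in H_0^1(\Omega)$, and obtains \eqref{Eq_Sobolev_inequality_H1} via Stein's extension operator $E:H^1(\Omega)\to H^1(\mathbb{R}^n)$, absorbing $\Vert E\Vert$ into the constant. Your anticipation that the proof is short and essentially bibliographical is exactly right.
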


\begin{proof}
The basic Gagliardo-Nirenberg-Sobolev inequality on $\Omega=\mathbb{R}^n$ is
\begin{equation}\label{Eq_Sobolev_inequality_Rn}
\Vert u\Vert_{L^{\frac{2n}{n-2}}}\leq C_S\Vert u\Vert_D,\qquad u\in H^1(\mathbb{R}^n),
\end{equation}
where the optimal constant $C_S$ is for example calculated in \cite{T76}. If we now consider $u\in H_0^1(\Omega)$, the zero-extension of $u$ is an element in $H^1(\mathbb{R}^n)$ and the inequality \eqref{Eq_Sobolev_inequality_H01} follows immediately from \eqref{Eq_Sobolev_inequality_Rn} with the same constant. If on the other hand $u\in H^1(\Omega)$, we have to use the extension operator $E:H^1(\Omega)\to H^1(\mathbb{R}^n)$ from \cite[Section VI.3.1,Theorem~5]{S70}, to get
\begin{equation*}
\Vert u\Vert_{L^{\frac{2n}{n-2}}(\Omega)}\leq\Vert Eu\Vert_{L^{\frac{2n}{n-2}}(\mathbb{R}^n)}\leq C_S\Vert Eu\Vert_{H^1(\mathbb{R}^n)}\leq C_S\Vert E\Vert\Vert u\Vert_{H^1(\Omega)}. \qedhere
\end{equation*}
\end{proof}

\begin{lem}[Poincar\'e inequality]\label{lem_Poincare_inequality}\cite[Corollary 9.19 \& Remark 21]{B11}
Let $\Omega\subseteq\mathbb{R}^n$ be open and either bounded in one direction or of finite measure. Then there exists a constant $C_P>0$, such that
\begin{equation}\label{Eq_Poincare_inequality}
\Vert u\Vert_{L^2}\leq C_P\Vert u\Vert_D,\qquad u\in H_0^1(\Omega).
\end{equation}
\end{lem}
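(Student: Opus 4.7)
The result is stated verbatim from Brezis, so my plan is to outline, in two short arguments, the classical routes for the two alternative hypotheses. In both cases density of $C_c^\infty(\Omega)$ in $H_0^1(\Omega)$ reduces matters to smooth compactly supported $u$, and the two sides of the inequality are continuous in the $H^1$-norm, so the final extension to $H_0^1(\Omega)$ is automatic.

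For the case that $\Omega\subseteq(a,b)\times\mathbb{R}^{n-1}$ is bounded in one direction, I would extend $u\in C_c^\infty(\Omega)$ by zero to $\mathbb{R}^n$ and use the fundamental theorem of calculus, $u(x_1,x')=\int_a^{x_1}\partial_{x_1}u(t,x')\,dt$. A single Cauchy--Schwarz in the variable $t$ gives $|u(x_1,x')|^2\le(b-a)\int_a^b|\partial_{x_1}u(t,x')|^2\,dt$; integrating over $\Omega$ produces $\|u\|_{L^2}\le(b-a)\|\partial_{x_1}u\|_{L^2}\le(b-a)\|u\|_D$, so $C_P=b-a$ suffices.

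For the case $|\Omega|<\infty$, I would invoke the Sobolev inequality of Lemma~\ref{lem_Sobolev_inequality} in combination with H\"older's inequality. Applying H\"older with conjugate exponents $\frac{n}{n-2}$ and $\frac{n}{2}$ to the product $|u|^2\cdot 1$ on $\Omega$ yields
\begin{equation*}
\|u\|_{L^2}\le|\Omega|^{1/n}\|u\|_{L^{\frac{2n}{n-2}}},
\end{equation*}
and combining this with \eqref{Eq_Sobolev_inequality_H01} gives $\|u\|_{L^2}\le C_S|\Omega|^{1/n}\|u\|_D$, so one may take $C_P=C_S|\Omega|^{1/n}$. Neither argument poses a real obstacle; the only piece of care is the density passage to $H_0^1(\Omega)$, which follows directly from its definition as the $H^1$-closure of $C_c^\infty(\Omega)$.
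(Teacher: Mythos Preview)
Your arguments are correct, but note that the paper itself gives no proof of this lemma at all: it is stated with a direct citation to \cite[Corollary~9.19 \& Remark~21]{B11} and nothing more. So there is no ``paper's own proof'' to compare against; you have supplied one where the authors chose to defer to the literature.

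Two small remarks on your write-up. First, your finite-measure argument implicitly uses $n\geq 3$, since it rests on Lemma~\ref{lem_Sobolev_inequality}; that is harmless in the context of this paper (which assumes $n\geq 3$ throughout), but the Poincar\'e lemma as stated carries no such restriction, and the Brezis reference covers all dimensions. Second, your appeal to \eqref{Eq_Sobolev_inequality_H01} is legitimate for an arbitrary open $\Omega$ of finite measure even though Lemma~\ref{lem_Sobolev_inequality} is phrased for minimally smooth domains: as the proof there shows, the $H_0^1$ inequality only needs zero-extension to $\mathbb{R}^n$, which requires no boundary regularity.
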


\section{Dirichlet boundary conditions}\label{sec_Dirichlet}

In this section we study the weak formulation of the spectral problem \eqref{Eq_Qs_spectral_problem} with Dirichlet boundary conditions, on some minimally smooth domain $\Omega\subseteq\mathbb{R}^n$, $n\geq 3$, in the sense of Stein \cite[Section VI.3.3]{S70}. Precisely, we equip the form \eqref{Eq_qs_formal} with Dirichlet boundary conditions, that is, for every $s\in\mathbb{R}^{n+1}$ we consider the form
\begin{equation}\label{Eq_qs_Dirichlet}
q_s^\mathsmaller{D}(u,v):=\sum_{i=1}^n\Big\langle\frac{\partial u}{\partial x_i},a_i^2\frac{\partial v}{\partial x_i}+(2s_0a_i-B_i)e_iv\Big\rangle_{L^2}+|s|^2\langle u,v\rangle_{L^2},
\end{equation}
with $\dom q_s^\mathsmaller{D}:=H_0^1(\Omega)\times H_0^1(\Omega)$. We study the connection between regularity and growth conditions of the coefficients of the operator $T$ in \eqref{Eq_T} and the most extended portion of the $S$-resolvent set of $T$.

\begin{prob}
Determine regularity and growth conditions on the coefficients $a_1,\dots,a_n$ in \eqref{Eq_T} and on the spectral parameter $s\in\mathbb{R}^{n+1}$, such that for every $f\in L^2(\Omega)$ there exists a unique solution $u_f\in H_0^1(\Omega)$ of
\begin{equation*}
q_s^\mathsmaller{D}(u_f,v)=\langle f,v\rangle_{L^2},\qquad\text{for all }v\in H_0^1(\Omega).
\end{equation*}
Furthermore, determine $L^2$- and $H^1$-estimates of $u_f$, depending on the parameter $s$.
\end{prob}

\begin{thm}\label{thm_Dirichlet}
Let the bounds of the coefficients $a_1,\dots,a_n$ in \eqref{Eq_Coefficient_bounds} satisfy
\begin{equation*}
m_a^2>C_SM_a',
\end{equation*}
with $C_S\geq 0$ from Lemma~\ref{lem_Sobolev_inequality}. Let us set $K_a:=\frac{M_a}{\sqrt{m_a^2-C_SM_a'}}$ and consider $s\in\mathbb{R}^{n+1}$ with
\begin{equation}\label{Eq_Assumption_Dirichlet}
|s|>K_a|s_0|.
\end{equation}
Then for every $f\in L^2(\Omega)$ there exists a unique $u_f\in H_0^1(\Omega)$ such that
\begin{equation}\label{Eq_Solution_Dirichlet}
q_s^\mathsmaller{D}(u_f,v)=\langle f,v\rangle_{L^2},\qquad\text{for all }v\in H_0^1(\Omega).
\end{equation}
Moreover, this solution satisfies the bounds
\begin{equation}\label{Eq_Estimate_Dirichlet}
\Vert u_f\Vert_{L^2}\leq\frac{\Vert f\Vert_{L^2}}{|s|^2-K_a^2s_0^2}\qquad\text{and}\qquad\Vert u_f\Vert_D\leq\frac{K_a|s|\Vert f\Vert_{L^2}}{M_a(|s|^2-K_a^2s_0^2)}.
\end{equation}
A visualization of these estimates can be seen in Figure 1.
\end{thm}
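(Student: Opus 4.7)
The plan is to apply the Clifford Lax--Milgram lemma (Lemma~\ref{lem_Lax_Milgram}) to $q_s^\mathsmaller{D}$ on $V=H_0^1(\Omega)$, with the right-linear bounded functional $\varphi(v):=\langle f,v\rangle_{L^2}$. Continuity of $q_s^\mathsmaller{D}$ on $H_0^1(\Omega)\times H_0^1(\Omega)$ is routine: each of the four constituent terms can be bounded by $\|u\|_{H^1}\|v\|_{H^1}$ using Lemma~\ref{lem_Properties}\,iii), Cauchy--Schwarz in the index $i$, the bounds \eqref{Eq_Coefficient_bounds}, and the Sobolev inequality \eqref{Eq_Sobolev_inequality_H01} (to absorb the $L^{\frac{2n}{n-2}}$-norm that arises in the $B_i$-contribution). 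The substantive ingredient is the coercivity estimate.

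To derive it, I take the scalar part of $q_s^\mathsmaller{D}(u,u)$ and treat the four resulting summands separately. The two diagonal ones are nonnegative: \eqref{Eq_ma} yields $\sum_i\int_\Omega a_i^2|\partial_iu|^2\,dx\geq m_a^2\|u\|_D^2$, and the last summand contributes $|s|^2\|u\|_{L^2}^2$. For the $s_0$-cross-term, Lemma~\ref{lem_Properties}\,iv), Cauchy--Schwarz in $i$, and \eqref{Eq_Ma} give
\begin{equation*}
\Big|2s_0\,\Sc\sum_{i=1}^n\langle\partial_iu,a_ie_iu\rangle_{L^2}\Big|\leq 2|s_0|M_a\|u\|_D\|u\|_{L^2}.
\end{equation*}
The $B_i$-cross-term is the delicate one. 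The key observation is that, at every $x\in\Omega$, the quantity $B_i(x)=\sum_je_ja_j(x)(\partial_ja_i)(x)$ is a paravector, so by Lemma~\ref{lem_Properties}\,ii) (applied pointwise in $\mathbb{R}_n$) we have $|B_i(x)e_iu(x)|=|B_i(x)||u(x)|$, hence $\|B_ie_iu\|_{L^2}^2\leq\sum_j\int(a_j\partial_ja_i)^2|u|^2\,dx$. A H\"older split with exponents $n/2$ and $n/(n-2)$ (this is what forces $n\geq 3$), followed by Cauchy--Schwarz in $(i,j)$, \eqref{Eq_Maprime}, and the Sobolev inequality \eqref{Eq_Sobolev_inequality_H01}, then produces
\begin{equation*}
\Big|\Sc\sum_{i=1}^n\langle\partial_iu,B_ie_iu\rangle_{L^2}\Big|\leq M_a'\|u\|_D\|u\|_{L^{\frac{2n}{n-2}}}\leq C_SM_a'\|u\|_D^2.
\end{equation*}
Assembling the four estimates yields the quadratic lower bound
\begin{equation}\label{Eq_proof_QF_lb}
\Sc q_s^\mathsmaller{D}(u,u)\geq(m_a^2-C_SM_a')\|u\|_D^2-2|s_0|M_a\|u\|_D\|u\|_{L^2}+|s|^2\|u\|_{L^2}^2,
\end{equation}
whose discriminant factors as $(m_a^2-C_SM_a')(|s|^2-K_a^2s_0^2)$. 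Under the hypotheses $m_a^2>C_SM_a'$ and $|s|>K_a|s_0|$ this discriminant is strictly positive, so a Young-inequality splitting of the mixed term with any $\epsilon\in\bigl(s_0^2M_a^2/|s|^2,\,m_a^2-C_SM_a'\bigr)$ gives coercivity $\Sc q_s^\mathsmaller{D}(u,u)\geq\kappa\|u\|_{H^1}^2$ for some $\kappa>0$, and Lemma~\ref{lem_Lax_Milgram} delivers the unique solution $u_f\in H_0^1(\Omega)$ of \eqref{Eq_Solution_Dirichlet}.

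For the sharp bounds \eqref{Eq_Estimate_Dirichlet} I would test the weak equation with $v=u_f$ and use $\Sc\langle f,u_f\rangle_{L^2}\leq\|f\|_{L^2}\|u_f\|_{L^2}$ from Lemma~\ref{lem_Properties}\,iv). Completing the square in $\|u_f\|_D$ inside \eqref{Eq_proof_QF_lb} reduces the coercivity inequality to $(|s|^2-K_a^2s_0^2)\|u_f\|_{L^2}^2\leq\|f\|_{L^2}\|u_f\|_{L^2}$, which gives the first estimate. Completing the square instead in $\|u_f\|_{L^2}$ yields $\tfrac{M_a^2(|s|^2-K_a^2s_0^2)}{K_a^2|s|^2}\|u_f\|_D^2\leq\|f\|_{L^2}\|u_f\|_{L^2}$, and substituting the just-obtained $L^2$-bound on the right-hand side produces the second (Sobolev-seminorm) estimate. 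The main obstacle in the whole argument is the $B_i$-contribution: only by exploiting the paravector identity of Lemma~\ref{lem_Properties}\,ii) together with the $L^n$--$L^2$--$L^{\frac{2n}{n-2}}$ H\"older split and the Sobolev embedding does one recover the clean constant $C_SM_a'$ that governs both the smallness condition $m_a^2>C_SM_a'$ and the dimensional restriction $n\geq 3$.
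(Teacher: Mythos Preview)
Your proof is correct and follows essentially the same route as the paper: both establish boundedness and the key coercivity lower bound \eqref{Eq_proof_QF_lb} via the paravector identity for $B_i$, the $L^n$--$L^{\frac{2n}{n-2}}$ H\"older split, and the Sobolev inequality \eqref{Eq_Sobolev_inequality_H01}, then apply the Clifford Lax--Milgram lemma and test with $v=u_f$. The only cosmetic difference is that where the paper introduces a Young-inequality parameter $\delta$ and passes to the endpoints $\delta\to\delta_0,\delta_1$ to extract the two estimates, you complete the square in $\Vert u_f\Vert_D$ and in $\Vert u_f\Vert_{L^2}$ respectively --- these are the same optimizations written two ways.
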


\begin{figure}
\begin{minipage}{0.49\textwidth}
\begin{center}
\begin{tikzpicture}[scale=2]
\fill[black!70] (-1.508,1)--(1.508,-1)--(-1.508,-1)--(1.508,1);
\draw[fill=black!60] plot[domain=-1.489:1.489] (\x,{(0.025+0.44*(\x)^2)^0.5});
\draw[fill=black!60] plot[domain=-1.489:1.489] (\x,{-(0.025+0.44*(\x)^2)^0.5});
\draw[fill=black!50] plot[domain=-1.469:1.469] (\x,{(0.05+0.44*(\x)^2)^0.5});
\draw[fill=black!50] plot[domain=-1.469:1.469] (\x,{-(0.05+0.44*(\x)^2)^0.5});
\draw[fill=black!40] plot[domain=-1.43:1.43] (\x,{(0.1+0.44*(\x)^2)^0.5});
\draw[fill=black!40] plot[domain=-1.43:1.43] (\x,{-(0.1+0.44*(\x)^2)^0.5});
\draw[fill=black!30] plot[domain=-1.348:1.348] (\x,{(0.2+0.44*(\x)^2)^0.5});
\draw[fill=black!30] plot[domain=-1.348:1.348] (\x,{-(0.2+0.44*(\x)^2)^0.5});
\draw[fill=black!20] plot[domain=-1.168:1.168] (\x,{(0.4+0.44*(\x)^2)^0.5});
\draw[fill=black!20] plot[domain=-1.168:1.168] (\x,{-(0.4+0.44*(\x)^2)^0.5});
\draw[fill=black!10] plot[domain=-0.674:0.674] (\x,{(0.8+0.44*(\x)^2)^0.5});
\draw[fill=black!10] plot[domain=-0.674:0.674] (\x,{-(0.8+0.44*(\x)^2)^0.5});
\draw[ultra thick] (-1.508,1)--(1.508,-1);
\draw[ultra thick] (-1.508,-1)--(1.508,1);
\draw (0.4,0.23) node[anchor=west] {\tiny{$|s|=K_a|s_0|$}};
\draw[ultra thick,white] (-1.55,1.01)--(1.55,1.01);
\draw[ultra thick,white] (-1.55,-1.01)--(1.55,-1.01);
\draw[->] (-1.5,0)--(1.5,0) node[anchor=south] {\tiny{$s_0$}};
\draw[->] (0,0.99)--(0,1.2);
\draw (0,1.1) node[anchor=west] {\tiny{$\Im(s)$}};
\end{tikzpicture}
\end{center}
\end{minipage}
\begin{minipage}{0.49\textwidth}
\begin{center}
\begin{tikzpicture}[scale=2]
\fill[black!70] (-1.508,1)--(1.508,-1)--(-1.508,-1)--(1.508,1);
\draw[fill=black!60] plot[domain=-1.44:1.44] (\x,{((0.025+1.2*(0.000434+(\x)^2)^(0.5))^2-(\x)^2)^0.5});
\draw[fill=black!60] plot[domain=-1.44:1.44] (\x,{-((0.025+1.2*(0.000434+(\x)^2)^(0.5))^2-(\x)^2)^0.5});
\draw[fill=black!50] plot[domain=-1.374:1.374] (\x,{((0.05+1.2*(0.00174+(\x)^2)^(0.5))^2-(\x)^2)^0.5});
\draw[fill=black!50] plot[domain=-1.374:1.374] (\x,{-((0.05+1.2*(0.00174+(\x)^2)^(0.5))^2-(\x)^2)^0.5});
\draw[fill=black!40] plot[domain=-1.244:1.244] (\x,{((0.1+1.2*(0.00694+(\x)^2)^(0.5))^2-(\x)^2)^0.5});
\draw[fill=black!40] plot[domain=-1.244:1.244] (\x,{-((0.1+1.2*(0.00694+(\x)^2)^(0.5))^2-(\x)^2)^0.5});
\draw[fill=black!30] plot[domain=-1:1] (\x,{((0.2+1.2*(0.0278+(\x)^2)^(0.5))^2-(\x)^2)^0.5});
\draw[fill=black!30] plot[domain=-1:1] (\x,{-((0.2+1.2*(0.0278+(\x)^2)^(0.5))^2-(\x)^2)^0.5});
\draw[fill=black!20] plot[domain=-0.495:0.495] (\x,{((0.4+1.2*(0.111+(\x)^2)^(0.5))^2-(\x)^2)^0.5});
\draw[fill=black!20] plot[domain=-0.495:0.495] (\x,{-((0.4+1.2*(0.111+(\x)^2)^(0.5))^2-(\x)^2)^0.5});
\draw[ultra thick] (-1.508,1)--(1.508,-1);
\draw[ultra thick] (-1.508,-1)--(1.508,1);
\draw[ultra thick,white] (-1.55,1.01)--(1.55,1.01);
\draw[ultra thick,white] (-1.55,-1.01)--(1.55,-1.01);
\draw[->] (-1.5,0)--(1.5,0) node[anchor=south] {\tiny{$s_0$}};
\draw[->] (0,0.99)--(0,1.2);
\draw (0,1.1) node[anchor=west] {\tiny{$\Im(s)$}};
\end{tikzpicture}
\end{center}
\end{minipage}
\caption{\small Since the $S$-spectrum is axially symmetric we only visualize its intersection with the plane $\mathbb{C}_J$ for some arbitrary $J\in\mathbb{S}$. In the pictures are plotted the lines $|s|=K_a|s_0|$ obtained by the condition \eqref{Eq_Assumption_Dirichlet}. These lines separate the dark cone region, which is part of the $S$-resolvent set, from the white region that contains the $S$-spectrum (the $S$-spectrum can be strictly contained in the white zone). The two pictures also visualize the level lines of the coefficients $\tau_1(s):=(|s|^2-K_a^2s_0^2)^{-1}$ and $\tau_2(s):=K_a|s|(M_a(|s|^2-K_a^2s_0^2))^{-1}$ from \eqref{Eq_Estimate_Dirichlet}, $\tau_1$ on the left and $\tau_2$ on the right. Here, dark colors indicate large values, light colors indicate small values. In particular, one sees that $\tau_1$ and $\tau_2$ tend to infinity, as $s$ approaches the lines $|s|=K_a|s_0|$. Also the $\mathcal{O}(|s|^{-2})$-behaviour of $\tau_1$ compared to the slower $\mathcal{O}(|s|^{-1})$-behaviour of $\tau_2$ for $|s|\to\infty$ can be seen.}
\end{figure}

\begin{proof}
For the proof of the weak solvability \eqref{Eq_Solution_Dirichlet}, we will verify that the form $q_s^\mathsmaller{D}$ satisfies the assumptions of Lemma~\ref{lem_Lax_Milgram}. For the boundedness, we use the Cauchy-Schwarz type inequality in Lemma~\ref{lem_Properties} iii), as well as the bound $M_a$ in \eqref{Eq_Ma}, to obtain
\begin{align*}
|q_s^\mathsmaller{D}(u,v)|&\leq 2^{\frac{n}{2}}\sum_{i=1}^n\Big\Vert\frac{\partial u}{\partial x_i}\Big\Vert_{L^2}\Big(M_a^2\Big\Vert\frac{\partial v}{\partial x_i}\Big\Vert_{L^2}+2M_a|s_0|\Vert v\Vert_{L^2}+\Vert B_ie_iv\Vert_{L^2}\Big)+2^{\frac{n}{2}}|s|^2\Vert u\Vert_{L^2}\Vert v\Vert_{L^2} \\
&\leq 2^{\frac{n}{2}}\Vert u\Vert_D\bigg(M_a^2\Vert v\Vert_D+2M_a|s_0|\Vert v\Vert_{L^2}+\bigg(\sum_{i=1}^n\Vert B_ie_iv\Vert_{L^2}^2\bigg)^{1/2}\bigg)+2^{\frac{n}{2}}|s|^2\Vert u\Vert_{L^2}\Vert v\Vert_{L^2}.
\end{align*}
Since $B_i$ and $e_i$ are both paravectors, we conclude from Lemma~\ref{lem_Properties} ii), that
\begin{equation*}
|B_ie_iv|=|B_i||e_iv|=|B_i||e_i|\Vert v\Vert=|B_i|\Vert v\Vert.
\end{equation*}
Hölder's inequality with $p=\frac{n}{2}$ and $q=\frac{2}{2-n}$, the Sobolev inequality \eqref{Eq_Sobolev_inequality_H01}, as well as the bound $M_a'$ in \eqref{Eq_Maprime}, then give the estimate of the term
\begin{align}
\sum_{i=1}^n\Vert B_ie_iv\Vert^2_{L^2}&=\sum_{i=1}^n\int_\Omega|B_ie_iv|^2dx=\sum_{i=1}^n\int_\Omega|B_i|^2|v|^2dx=\sum_{i,j=1}^n\int_\Omega\Big|a_j\frac{\partial a_i}{\partial x_j}\Big|^2|v|^2dx \notag \\
&\leq\sum_{i,j=1}^n\Big\Vert a_j\frac{\partial a_i}{\partial x_j}\Big\Vert_{L^n}^2\Vert v\Vert_{L^{\frac{2n}{n-2}}}^2\leq C_S^2\sum_{i,j=1}^n\Big\Vert a_j\frac{\partial a_i}{\partial x_j}\Big\Vert^2_{L^n}\Vert v\Vert^2_D\leq C_S^2M_a'^2\Vert v\Vert^2_D. \label{Eq_Dirichlet_5}
\end{align}
Hence, we conclude the boundedness
\begin{align}
|q_s^\mathsmaller{D}(u,v)|&\leq 2^{\frac{n}{2}}\Vert u\Vert_D\big(M_a^2\Vert v\Vert_D+2M_a|s_0|\Vert v\Vert_{L^2}+C_SM_a'\Vert v\Vert_D\big)+2^{\frac{n}{2}}|s|^2\Vert u\Vert_{L^2}\Vert v\Vert_{L^2} \notag \\
&\leq 2^{\frac{n}{2}}\big(M_a^2+2M_a|s_0|+C_SM_a'+|s|^2\big)\Vert u\Vert_{H^1}\Vert v\Vert_{H^1}. \label{Eq_Dirichlet_11}
\end{align}
Next, for the coercivity of the scalar part $\Sc q_s^\mathsmaller{D}$, we use the lower bound $m_a$ in \eqref{Eq_ma} and the Cauchy-Schwarz type inequality of Lemma~\ref{lem_Properties} iv), to get
\begin{align}
\Sc q_s^\mathsmaller{D}(u,u)&\geq\sum\limits_{i=1}^n\bigg(m_a^2\Big\Vert\frac{\partial u}{\partial x_i}\Big\Vert_{L^2}^2-2M_a|s_0|\Big\Vert\frac{\partial u}{\partial x_i}\Big\Vert_{L^2}\Vert u\Vert_{L^2}-\Big\Vert\frac{\partial u}{\partial x_i}\Big\Vert_{L^2}\Vert B_ie_iu\Vert_{L^2}\bigg)+|s|^2\Vert u\Vert^2_{L^2} \notag \\
&\geq(m_a^2-C_SM_a')\Vert u\Vert_D^2-2M_a|s_0|\Vert u\Vert_D\Vert u\Vert_{L^2}+|s|^2\Vert u\Vert^2_{L^2}, \label{Eq_Dirichlet_1}
\end{align}
where in the second inequality we again used \eqref{Eq_Dirichlet_5}. By the assumption \eqref{Eq_Assumption_Dirichlet} on $s$, there is
\begin{equation}\label{Eq_Dirichlet_8}
\delta_0:=\frac{K_a^2|s_0|}{M_a}<\frac{|s|^2}{M_a|s_0|}=:\delta_1.
\end{equation}
Note, that we interpret $\delta_1=\infty$ if $s_0=0$. Hence, for $\delta\in(\delta_0,\delta_1)$, we use Young's inequality
\begin{equation*}
\Vert u\Vert_D\Vert u\Vert_{L^2}\leq\frac{1}{2\delta}\Vert u\Vert_D^2+\frac{\delta}{2}\Vert u\Vert^2_{L^2},
\end{equation*}
to further estimate the lower bound \eqref{Eq_Dirichlet_1} by
\begin{equation}\label{Eq_Dirichlet_2}
\Sc q_s^\mathsmaller{D}(u,u)\geq\Big(\frac{M_a^2}{K_a^2}-\frac{M_a|s_0|}{\delta}\Big)\Vert u\Vert^2_D+\big(|s|^2-M_a|s_0|\delta\big)\Vert u\Vert^2_{L^2}
\end{equation}
Note, that $\delta_0$ and $\delta_1$ in \eqref{Eq_Dirichlet_8} are chosen exactly in the way that both brackets on the right hand side of \eqref{Eq_Dirichlet_2} are positive. Hence we have proven that $\Sc q_s^\mathsmaller{D}$ is coercive in $H_0^1(\Omega)$. Fixing now any function $f\in L^2(\Omega)$, we can consider the corresponding functional
\begin{equation*}
\varphi_f(v):=\langle f,v\rangle_{L^2},\qquad v\in H_0^1(\Omega).
\end{equation*}
Then by Lemma~\ref{lem_Properties}~iii), this functional is bounded in $H_0^1(\Omega)$, i.e.
\begin{equation}\label{Eq_Dirichlet_3}
|\varphi_f(v)|=|\langle f,v\rangle_{L^2}|\leq 2^{\frac{n}{2}}\Vert f\Vert_{L^2}\Vert v\Vert_{L^2}\leq 2^{\frac{n}{2}}\Vert f\Vert_{L^2}\Vert v\Vert_{H^1},\qquad v\in H_0^1(\Omega).
\end{equation}
Hence, the assumptions of Lemma~\ref{lem_Lax_Milgram} are satisfied and there exists a unique weak solution $u_f\in H_0^1(\Omega)$, which satisfies
\begin{equation}\label{Eq_Dirichlet_4}
q_s^\mathsmaller{D}(u_f,v)=\varphi_f(v)=\langle f,v\rangle_{L^2},\qquad\text{for all }v\in H_0^1(\Omega).
\end{equation}
For the $L^2$-estimate in \eqref{Eq_Estimate_Dirichlet}, we test \eqref{Eq_Dirichlet_4} with $v=u_f$. Using also Lemma~\ref{lem_Properties} iv), this gives
\begin{equation}\label{Eq_Dirichlet_7}
\Sc q_s^\mathsmaller{D}(u_f,u_f)=\Sc\langle f,u_f\rangle_{L^2}\leq\Vert f\Vert_{L^2}\Vert u_f\Vert_{L^2}.
\end{equation}
Combining this inequality with the coercivity estimate \eqref{Eq_Dirichlet_2}, furthermore gives
\begin{equation*}
\big(|s|^2-M_a|s_0|\delta\big)\Vert u_f\Vert^2_{L^2}\leq\Sc q_s^\mathsmaller{D}(u_f,u_f)\leq\Vert f\Vert_{L^2}\Vert u_f\Vert_{L^2}.
\end{equation*}
However, since $\delta\in(\delta_0,\delta_1)$ was chosen arbitrary, we can maximize the left hand side by taking the limit $\delta\to\delta_0=\frac{K_a^2|s_0|}{M_a}$, and get
\begin{equation}\label{Eq_Dirichlet_9}
\Vert u_f\Vert_{L^2}\leq\frac{\Vert f\Vert_{L^2}}{|s|^2-K_a^2s_0^2}.
\end{equation}
For the $\Vert\cdot\Vert_D$-estimate, we get from \eqref{Eq_Dirichlet_2}, \eqref{Eq_Dirichlet_7} and \eqref{Eq_Dirichlet_9} for every $\delta\in(\delta_0,\delta_1)$ the estimate
\begin{equation*}
\Big(\frac{M_a^2}{K_a^2}-\frac{M_a|s_0|}{\delta}\Big)\Vert u_f\Vert^2_D\leq\Sc q_s^D(u_f,u_f)\leq\Vert f\Vert_{L^2}\Vert u_f\Vert_{L^2}\leq\frac{\Vert f\Vert_{L^2}^2}{|s|^2-K_a^2s_0^2}.
\end{equation*}
Maximizing the left hand side with respect to $\delta\in(\delta_0,\delta_1)$, i.e. taking the limit $\delta\to\delta_1=\frac{|s|^2}{M_a|s_0|}$, gives
\begin{equation*}
\Big(\frac{M_a^2}{K_a^2}-\frac{M_a^2s_0^2}{|s|^2}\Big)\Vert u_f\Vert^2_D\leq\Vert f\Vert_{L^2}\Vert u_f\Vert_{L^2}\leq\frac{\Vert f\Vert_{L^2}^2}{|s|^2-K_a^2s_0^2}
\end{equation*}
Rearranging this inequality gives exactly the second estimate in \eqref{Eq_Estimate_Dirichlet}.
\end{proof}

In the special case of minimally smooth domains $\Omega\subseteq\mathbb{R}^n$, which are either bounded in one direction or of finite measure, we even get stronger results. In particular, the double sector of Theorem~\ref{thm_Dirichlet} is modified and disconnected at the origin $s=0$, which means that the Dirichlet form $q_s^\mathsmaller{D}$ is uniquely solvable also in a vertical strip around the imaginary axis.

\begin{thm}\label{thm_Dirichlet_bounded}
Let $\Omega\subseteq\mathbb{R}^n$ be a bounded minimally smooth domain in $n\geq 3$ dimensions and let the bounds \eqref{Eq_Coefficient_bounds} of the coefficients $a_1,\dots,a_n$ in \eqref{Eq_Coefficient_bounds} satisfy
\begin{equation*}
m_a^2>C_SM_a',
\end{equation*}
with $C_S\geq 0$ from Lemma~\ref{lem_Sobolev_inequality}. Let us set $K_a:=\frac{M_a}{\sqrt{m_a^2-C_SM_a'}}$, use the Poincar\'e constant $C_P\in(0,1]$ from Lemma~\ref{lem_Poincare_inequality}, and consider $s\in\mathbb{R}^{n+1}$ with
\begin{equation}\label{Eq_Assumption_Dirichlet_bounded}
|s|^2>\begin{cases} \frac{2M_a|s_0|}{C_P}-\frac{M_a^2}{C_P^2K_a^2}, & \text{if }|s_0|\leq\frac{M_a}{C_PK_a^2}, \\ K_a^2s_0^2, & \text{if }|s_0|\geq\frac{M_a}{C_PK_a^2}. \end{cases}
\end{equation}
Then for every $f\in L^2(\Omega)$ there exists a unique $u_f\in H_0^1(\Omega)$, such that
\begin{equation}\label{Eq_Solution_Dirichlet_bounded}
q_s^\mathsmaller{D}(u_f,v)=\langle f,v\rangle_{L^2},\qquad {\rm for\ all}\ \  v\in H_0^1(\Omega).
\end{equation}
Moreover, this solution admits the norm estimates
\begin{equation}\label{Eq_Estimate_Dirichlet_bounded}
\Vert u_f\Vert_{L^2}\leq\frac{\Vert f\Vert_{L^2}}{\kappa_0(s)}\qquad\text{and}\qquad\Vert u_f\Vert_D\leq\frac{\Vert f\Vert_{L^2}}{\sqrt{\kappa_0(s)\kappa_1(s)}},
\end{equation}
where
\begin{align}
\kappa_0(s):=&\begin{cases} |s|^2-\frac{2M_a|s_0|}{C_P}+\frac{M_a^2}{C_P^2K_a^2}, & \text{if }|s_0|\leq\frac{M_a}{C_PK_a^2}, \\ |s|^2-K_a^2s_0^2, & \text{if }|s_0|\geq\frac{M_a}{C_PK_a^2}, \end{cases} \label{Eq_kappa0} \\
\kappa_1(s):=&\begin{cases} C_P^2\big(|s|^2-\frac{2M_a|s_0|}{C_P}+\frac{M_a^2}{C_P^2K_a^2}\big), & \text{if }|s|^2\leq\frac{M_a|s_0|}{C_P}, \\ \frac{M_a^2(|s|^2-K_a^2s_0^2)}{K_a^2|s|^2}, & \text{if }|s|^2\geq\frac{M_a|s_0|}{C_P}. \end{cases} \label{Eq_kappa1}
\end{align}
\end{thm}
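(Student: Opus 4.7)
The plan is to follow closely the proof of Theorem~\ref{thm_Dirichlet}, using the Lax-Milgram lemma on the form $q_s^\mathsmaller{D}$ over $H_0^1(\Omega)\times H_0^1(\Omega)$, but exploiting the extra tool available here, namely the Poincar\'e inequality \eqref{Eq_Poincare_inequality}, to sharpen the coercivity step. The outcome of this sharpening is precisely the pair of positivity constants $\kappa_0(s),\kappa_1(s)$ in \eqref{Eq_kappa0}, \eqref{Eq_kappa1}, which will bound $\Sc q_s^\mathsmaller{D}(u,u)$ from below in two complementary ways and drive the $L^2$- and Sobolev-estimates respectively.

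The boundedness of $q_s^\mathsmaller{D}$ on $H_0^1(\Omega)\times H_0^1(\Omega)$ is identical to that obtained in \eqref{Eq_Dirichlet_11}; the Poincar\'e inequality plays no role here. For coercivity, I start from the intermediate inequality
\begin{equation*}
\Sc q_s^\mathsmaller{D}(u,u)\geq\tfrac{M_a^2}{K_a^2}\Vert u\Vert_D^2-2M_a|s_0|\,\Vert u\Vert_D\Vert u\Vert_{L^2}+|s|^2\Vert u\Vert^2_{L^2}
\end{equation*}
obtained in \eqref{Eq_Dirichlet_1}, and view the right-hand side as a quadratic form in the pair $(x,y):=(\Vert u\Vert_D,\Vert u\Vert_{L^2})$ subject to the Poincar\'e constraint $y\leq C_P x$. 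Instead of applying a generic Young's inequality as in the unbounded case, I seek the largest constants $\kappa_0(s),\kappa_1(s)\geq 0$ such that
\begin{equation*}
\tfrac{M_a^2}{K_a^2}x^2-2M_a|s_0|\,xy+|s|^2y^2\geq\kappa_0(s)\,y^2\qquad\text{and}\qquad\geq\kappa_1(s)\,x^2,
\end{equation*}
hold for every $x,y\geq 0$ with $y\leq C_P x$. This constrained optimization is the content of Section~\ref{sec_Maximization_problem}, and the two-case formulas \eqref{Eq_kappa0}, \eqref{Eq_kappa1} reflect whether the unconstrained minimizer of the quadratic lies inside or on the boundary of the Poincar\'e feasible cone $\{y\leq C_P x\}$.

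Once these two coercivity estimates are in place, assumption \eqref{Eq_Assumption_Dirichlet_bounded} is designed precisely to force $\kappa_0(s)>0$, whence $\Sc q_s^\mathsmaller{D}$ is coercive on $H_0^1(\Omega)$. Together with the boundedness of the functional $\varphi_f(v)=\langle f,v\rangle_{L^2}$ as in \eqref{Eq_Dirichlet_3}, Lemma~\ref{lem_Lax_Milgram} produces a unique $u_f\in H_0^1(\Omega)$ satisfying \eqref{Eq_Solution_Dirichlet_bounded}. Testing with $v=u_f$ and applying Lemma~\ref{lem_Properties}~iv) as in \eqref{Eq_Dirichlet_7} gives $\Sc q_s^\mathsmaller{D}(u_f,u_f)\leq\Vert f\Vert_{L^2}\Vert u_f\Vert_{L^2}$. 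Combining with the two lower bounds from the coercivity step yields
\begin{equation*}
\kappa_0(s)\Vert u_f\Vert_{L^2}^2\leq\Vert f\Vert_{L^2}\Vert u_f\Vert_{L^2}\qquad\text{and}\qquad\kappa_1(s)\Vert u_f\Vert_D^2\leq\Vert f\Vert_{L^2}\Vert u_f\Vert_{L^2}\leq\tfrac{\Vert f\Vert_{L^2}^2}{\kappa_0(s)},
\end{equation*}
which rearrange immediately to the announced estimates \eqref{Eq_Estimate_Dirichlet_bounded}.

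The principal obstacle is the coercivity step: extracting the exact closed forms of $\kappa_0(s)$ and $\kappa_1(s)$ from the constrained quadratic maximization, and checking that the threshold \eqref{Eq_Assumption_Dirichlet_bounded} coincides with the region where $\kappa_0(s)>0$, so that coercivity degenerates precisely at the boundary of the admissible set of spectral parameters. This delicate piece is isolated in Section~\ref{sec_Maximization_problem}, and here I merely invoke it; everything else in the argument is a direct adaptation of the proof of Theorem~\ref{thm_Dirichlet}.
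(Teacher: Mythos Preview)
Your plan is correct and would produce the stated result, but it is \emph{not} quite the route the paper takes, and your reference to Section~\ref{sec_Maximization_problem} is slightly off. The paper does not directly solve the constrained quadratic minimization you describe (finding the best $\kappa_0,\kappa_1$ such that $\tfrac{M_a^2}{K_a^2}x^2-2M_a|s_0|xy+|s|^2y^2\geq\kappa_0 y^2$ resp.\ $\geq\kappa_1 x^2$ subject to $y\leq C_Px$). Instead it splits the cross term with a parameter $\alpha\in[0,1]$, applies Young's inequality with a second parameter $\delta$ together with Poincar\'e, and arrives at the two-parameter lower bound \eqref{Eq_Dirichlet_bounded_4}; Lemma~\ref{lem_Maximization_problem} is then a somewhat elaborate optimization over $(\alpha,\delta)$ with the substitutions $x=\tfrac{M_a}{C_PK_a^2|s_0|}$, $y=\tfrac{C_P|s|^2}{M_a|s_0|}$, and it is only after the two maximizations \eqref{Eq_Maximization_problem1} and \eqref{Eq_Maximization_problem2} that the formulas \eqref{Eq_kappa0}--\eqref{Eq_kappa1} emerge.

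Your direct approach is actually cleaner: writing $t=\Vert u\Vert_D/\Vert u\Vert_{L^2}\geq 1/C_P$ (for $\kappa_0$) or $r=\Vert u\Vert_{L^2}/\Vert u\Vert_D\in[0,C_P]$ (for $\kappa_1$) reduces everything to minimizing a single one-variable parabola on a half-line or interval, and the two cases in \eqref{Eq_kappa0}--\eqref{Eq_kappa1} fall out immediately from whether the vertex lies in the admissible range. This bypasses Lemma~\ref{lem_Maximization_problem} entirely and needs only elementary calculus. What the paper's $(\alpha,\delta)$ parametrization buys is a uniform treatment via Young's inequality that parallels the unbounded-domain proof, at the cost of the auxiliary lemma. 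Either way the remaining steps (Lax--Milgram, testing with $v=u_f$, and the two chained inequalities giving \eqref{Eq_Estimate_Dirichlet_bounded}) are exactly as you outline. One small point you gloss over: coercivity in $H^1$ requires a lower bound by $\Vert u\Vert_{H^1}^2$, not just $\Vert u\Vert_{L^2}^2$; but since $\kappa_0(s)>0\Leftrightarrow\kappa_1(s)>0$ (both express positivity of the same quadratic on the Poincar\'e cone), and $\kappa_1(s)>0$ together with Poincar\'e gives the $H^1$ bound, this is fine.
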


\vspace{-0.4cm}

\begin{figure}[H]
\begin{minipage}{0.49\textwidth}
\begin{center}
\begin{tikzpicture}[scale=2]
\draw[->] (-1.5,0)--(1.5,0) node[anchor=south] {\tiny{$s_0$}};
\draw[ultra thick,fill=black!70] (1.508,-1)--(0.556,-0.369) arc (236.44:123.56:0.442)--(1.508,1)--(-1.508,1)--(-0.556,0.369) arc (56.44:-56.44:0.442)--(-1.508,-1);
\draw[fill=black!60] (1.477,1)--(0.556,0.419) arc (120.24:239.76:0.485)--(1.477,-1)--(-1.477,-1)--(-0.556,-0.419) arc (-59.76:59.76:0.485)--(-1.477,1);
\draw[fill=black!50] (1.446,1)--(0.556,0.465) arc (117.75:242.25:0.525)--(1.446,-1)--(-1.446,-1)--(-0.556,-0.465) arc (-62.25:62.25:0.525)--(-1.446,1);
\draw[fill=black!40] (1.382,1)--(0.556,0.544) arc (114.2:245.8:0.596)--(1.382,-1)--(-1.382,-1)--(-0.556,-0.544) arc (-65.80:65.80:0.596)--(-1.382,1);
\draw[fill=black!30] (1.243,1)--(0.556,0.675) arc (109.9:250.1:0.718)--(1.243,-1)--(-1.243,-1)--(-0.556,-0.675) arc (-70.1:70.1:0.718)--(-1.243,1);
\draw[fill=black!20] (0.904,1)--(0.556,0.881) arc (105.51:151.58:0.914) arc (28.92:74.49:0.914)--(-0.904,1);
\draw[fill=black!20] (0.904,-1)--(0.556,-0.881) arc (254.49:208.92:0.914) arc (-28.92:-74.49:0.914)--(-0.904,-1);
\draw[fill=black!10] (0.172,1) arc (122.14:132.64:1.181) arc (47.36:57.86:1.181);
\draw[fill=black!10] (0.172,-1) arc (237.86:227.36:1.181) arc (-47.36:-57.86:1.181);
\draw[dashed] (0.556,0.369)--(0.556,1)--(-0.556,1)--(-0.556,0.369);
\draw[dashed] (0.556,-0.369)--(0.556,-1)--(-0.556,-1)--(-0.556,-0.369);
\draw[ultra thick,white] (-1.55,1.01)--(1.55,1.01);
\draw[ultra thick,white] (-1.55,-1.01)--(1.55,-1.01);
\draw (0.8,0.05)--(0.8,-0.05);
\draw (0.8,0) node[anchor=north] {\tiny{$\frac{M_a}{C_P}$}};
\draw[->] (0.8,0)--(0.43,0.22);
\draw (0.44,0.25) node[anchor=west] {\tiny{$\frac{M_a\sqrt{1-K_a^{-2}}}{C_P}$}};
\draw (0.8,0.5) node[anchor=west] {\tiny{$|s|=K_a|s_0|$}};
\draw[->] (0,0.99)--(0,1.2);
\draw (0,1.1) node[anchor=west] {\tiny{$\Im(s)$}};
\end{tikzpicture}
\end{center}
\end{minipage}
\begin{minipage}{0.49\textwidth}
\begin{center}
\begin{tikzpicture}[scale=2]
\draw[->] (-1.5,0)--(1.5,0) node[anchor=south] {\tiny{$s_0$}};
\draw[ultra thick,fill=black!70] (1.508,-1)--(0.556,-0.369) arc (236.44:123.56:0.442)--(1.508,1)--(-1.508,1)--(-0.556,0.369) arc (56.44:-56.44:0.442)--(-1.508,-1);
\draw[fill=black!60] (1.4,1)--(0.556,0.439)--(0.489,0.39) arc (128.6:231.4:0.499)--(0.556,-0.439)--(1.4,-1)--(-1.4,-1)--(-0.556,-0.439)--(-0.489,-0.39) arc (-51.4:51.4:0.499)--(-0.489,0.39)--(-1.4,1);
\draw[fill=black!50] (1.295,1)--(0.556,0.506) arc (122.86:134.34:0.857) arc (135.72:224.28:0.57) arc (225.66:237.14:0.857)--(1.295,-1)--(-1.295,-1)--(-0.556,-0.506) arc (-57.14:-45.66:0.857) arc (-44.28:44.28:0.57) arc (45.66:57.14:0.857)--(-1.295,1);
\draw[fill=black!40] (1.093,1)--(0.556,0.637) arc (123.29:143.62:1.042) arc (143.06:216.94:0.639) arc (216.38:236.71:1.042)--(1.093,-1)--(-1.093,-1)--(-0.556,-0.637) arc (-56.71:-36.38:1.042) arc (-36.94:36.94:0.639) arc (36.38:56.71:1.042)--(-1.093,1);
\draw[fill=black!30] (0.703,1)--(0.556,0.902) arc (120.65:169.96:1.124) arc (167.23:192.77:0.593) arc (190.04:239.35:1.124)--(0.703,-1)--(-0.703,-1)--(-0.556,-0.902) arc (-59.35:-10.04:1.124) arc (-12.77:12.77:0.593) arc (10.04:59.35:1.124)--(-0.703,1);
\draw[fill=black!20] (0.21,1)--(0,0.798)--(-0.21,1);
\draw[fill=black!20] (0.21,-1)--(0,-0.798)--(-0.21,-1);
\draw[dashed] (0.556,1)--(0.556,0.369) arc (67.11:292.89:0.4)--(0.556,-1)--(-0.556,-1)--(-0.556,-0.369) arc (-112.89:112.89:0.4)--(-0.556,1);
\draw (-0.556,0.05)--(-0.556,-0.05);
\draw (-0.65,0) node[anchor=north] {\tiny{$\frac{M_a}{C_PK_a^2}$}};
\draw (0.42,0.05)--(0.42,-0.05);
\draw (0.54,0.03) node[anchor=north] {\tiny{$\frac{M_a}{2C_P}$}};
\draw[->] (0.42,0)--(0.117,0.283) node[anchor=south] {\tiny{$\frac{M_a}{2C_P}$}};
\draw[ultra thick,white] (-1.55,1.01)--(1.55,1.01);
\draw[ultra thick,white] (-1.55,-1)--(1.55,-1);
\draw[->] (0,0.99)--(0,1.2);
\draw (0,1.1) node[anchor=west] {\tiny{$\Im(s)$}};
\end{tikzpicture}
\end{center}
\end{minipage}
\caption{\small Since the $S$-spectrum is axially symmetric we only visualize its intersection with the plane $\mathbb{C}_J$ for some arbitrary $J\in\mathbb{S}$. The dark color in the two graphics indicates a region of points in the $S$-resolvent set, while the white region contains the $S$-spectrum (but can be strictly contained in it). The two pictures also visualize the level lines of the coefficients $\tau_1(s):=\kappa_0(s)^{-1}$ and $\tau_2(s):=(\kappa_0(s)\kappa_1(s))^{-1/2}$ appearing in the estimate  \eqref{Eq_Estimate_Dirichlet_bounded}, $\tau_1$ on the left and $\tau_2$ on the right. Here, dark colors indicate large values, light colors indicate small values. In particular, one sees that $\tau_1$ and $\tau_2$ tend to infinity as $s$ approaches the boundary of the dark region. Also the $\mathcal{O}(|s|^{-2})$-behaviour of $\tau_1$ compared to the slower $\mathcal{O}(|s|^{-1})$-behaviour of $\tau_2$ for $|s|\to\infty$ can be seen. Compared with Figure 1, we see that here the $S$-spectrum has a gap at the origin which is generated by the use of the Poincar\'e inequality.}
\end{figure}

\begin{proof}
In the same way as in \eqref{Eq_Dirichlet_11}, one proves the boundedness
\begin{equation*}
|q_s^\mathsmaller{D}(u,v)|\leq 2^{\frac{n}{2}}\big(M_a^2+2M_a|s_0|+C_SM_a'+|s|^2\big)\Vert u\Vert_{H^1}\Vert v\Vert_{H^1},\qquad u,v\in H_0^1(\Omega).
\end{equation*}
Also for the coercivity of $\Sc q_s^\mathsmaller{D}$, we start as in \eqref{Eq_Dirichlet_1} with the inequality
\begin{equation}\label{Eq_Dirichlet_bounded_1}
\Sc q_s(u,u)\geq\frac{M_a^2}{K_a^2}\Vert u\Vert^2_D-2M_a|s_0|\Vert u\Vert_D\Vert u\Vert_{L^2}+|s|^2\Vert u\Vert^2_{L^2},
\end{equation}
where $K_a^2=\frac{M_a^2}{m_a^2-C_SM_a'}$. For any $\alpha\in[0,1]$ and $\delta>0$, we now use Young's inequality
\begin{equation*}
\Vert u\Vert_D\Vert v\Vert_{L^2}\leq\frac{C_P}{2\delta}\Vert u\Vert_D^2+\frac{\delta}{2C_P}\Vert u\Vert_{L^2}^2
\end{equation*}
as well as the Poincar\'e inequality \eqref{Eq_Poincare_inequality} in the coercivity estimate \eqref{Eq_Dirichlet_bounded_1}, to get
\begin{align}
\Sc q_s^\mathsmaller{D}(u,u)&\geq\frac{M_a^2}{K_a^2}\Vert u\Vert_D^2-2\alpha M_a|s_0|\Vert u\Vert_D\Vert u\Vert_{L^2}-2(1-\alpha)M_a|s_0|\Vert u\Vert_D\Vert u\Vert_{L^2}+|s|^2\Vert u\Vert^2_{L^2} \notag \\
&\geq M_a\Big(\frac{M_a}{K_a^2}-\frac{\alpha C_P|s_0|}{\delta}-2(1-\alpha)C_P|s_0|\Big)\Vert u\Vert^2_D+\Big(|s|^2-\frac{\alpha\delta M_a|s_0|}{C_P}\Big)\Vert u\Vert^2_{L^2}. \label{Eq_Dirichlet_bounded_4}
\end{align}
In order to get an $H^1$-coercivity lower bound, both brackets on the right hand side have to be positive, i.e. we have to choose $\alpha\in[0,1]$ and $\delta>0$ such that
\begin{align*}
\text{I}:\;\;&\frac{M_a}{K_a^2}-\frac{\alpha C_P|s_0|}{\delta}-2(1-\alpha)C_P|s_0|>0 && \Rightarrow\quad\frac{1}{\delta}<\frac{M_a-2(1-\alpha)C_PK_a^2|s_0|}{\alpha C_PK_a^2|s_0|}, \\
\text{II}:\;\;&|s|^2-\frac{\alpha\delta M_a|s_0|}{C_P}>0 && \Rightarrow\quad\delta<\frac{C_P|s|^2}{\alpha M_a|s_0|}.
\end{align*}
However, such a choice is only possible if
\begin{equation*}
M_a-2(1-\alpha)C_PK_a^2|s_0|>0\qquad\text{and}\qquad\frac{\alpha C_PK_a^2|s_0|}{M_a-2(1-\alpha)C_PK_a^2|s_0|}<\frac{C_P|s|^2}{\alpha M_a|s_0|}.
\end{equation*}
If we now set
\begin{equation}\label{Eq_Dirichlet_bounded_6}
x:=\frac{M_a}{C_PK_a^2|s_0|}\qquad\text{and}\qquad y:=\frac{C_P|s|^2}{M_a|s_0|},
\end{equation}
these conditions turn into
\begin{equation}\label{Eq_Dirichlet_bounded_7}
x-2+2\alpha>0\qquad\text{and}\qquad(x-2)y+2\alpha y-\alpha^2>0.
\end{equation}
Taking now Lemma \ref{lem_Maximization_problem}, the second condition is satisfied if and only if
\begin{equation*}
y>\begin{cases} \frac{1}{x}, & \text{if }x\leq 1, \\ 2-x, & \text{if }x\geq 1, \end{cases}
\end{equation*}
which, plugging back in \eqref{Eq_Dirichlet_bounded_6}, turns into the assumed condition \eqref{Eq_Assumption_Dirichlet_bounded}. Moreover, if we choose the values $\alpha_0<\alpha_1\in[0,1]$ from \eqref{Eq_alpha0_alpha1}, then for every $\alpha\in(\alpha_0,\alpha_1)$ also the first condition in \eqref{Eq_Dirichlet_bounded_7} is satisfied. Altogether, for every $\alpha\in(\alpha_0,\alpha_1)$ and $\delta$ in between
\begin{equation*}
\delta_0(\alpha):=\frac{\alpha C_PK_a^2|s_0|}{M_a-2(1-\alpha)C_PK_a^2|s_0|}<\delta<\frac{C_P|s|^2}{\alpha M_a|s_0|}=:\delta_1(\alpha),
\end{equation*}
both brackets in \eqref{Eq_Dirichlet_bounded_4} are positive and we have proven the coercivity of the scalar part $\Sc q_s^\mathsmaller{D}$. Let now $f\in L^2(\Omega)$, and consider the functional $\varphi_f(v):=\langle f,v\rangle_{L^2}$, $v\in H_0^1(\Omega)$, which is bounded in $H_0^1(\Omega)$ by the same estimate as in \eqref{Eq_Dirichlet_3}. Lemma~\ref{lem_Lax_Milgram} then states the existence of a unique $u_f\in H_0^1(\Omega)$, which satisfies
\begin{equation}\label{Eq_Dirichlet_bounded_5}
q_s^\mathsmaller{D}(u_f,v)=\varphi_f(v)=\langle f,v\rangle_{L^2},\qquad\text{for all }v\in H_0^1(\Omega).
\end{equation}
Next, we will prove the bounds \eqref{Eq_Estimate_Dirichlet_bounded} of this solution. Using the Poincar\'e inequality \eqref{Eq_Poincare_inequality} in the form $\Vert u_f\Vert_D\geq\frac{1}{C_P}\Vert u_f\Vert_{L^2}$ on the coercivity estimate \eqref{Eq_Dirichlet_bounded_4}, gives
\begin{equation}\label{Eq_Dirichlet_bounded_8}
\Sc q_s^\mathsmaller{D}(u_f,u_f)\geq\Big(|s|^2-\frac{2M_a|s_0|}{C_P}+\frac{M_a^2}{C_P^2K_a^2}+\frac{\alpha M_a|s_0|}{C_P}\Big(2-\frac{1}{\delta}-\delta\Big)\Big)\Vert u_f\Vert^2_{L^2}.
\end{equation}
This lower bound can now be maximized with respect to the two free parameters $\alpha\in(\alpha_0,\alpha_1)$ and $\delta\in(\delta_0(\alpha),\delta_1(\alpha))$. However, the maximization problem \eqref{Eq_Maximization_problem2} with the values \eqref{Eq_Dirichlet_bounded_6}, already states that
\begin{equation*}
\sup\limits_{\stackrel{\alpha\in(\alpha_0,\alpha_1)}{\mathsmaller{\delta\in(\delta_0(\alpha),\delta_1(\alpha))}}}\alpha\Big(2-\frac{1}{\delta}-\delta\Big)=\begin{cases} 0, & \text{if }|s_0|\leq\frac{M_a}{C_PK_a^2}, \\ 2-\frac{M_a}{C_PK_a^2|s_0|}-\frac{C_PK_a^2|s_0|}{M_a}, & \text{if }|s_0|\geq\frac{M_a}{C_PK_a^2}. \end{cases}
\end{equation*}
This maximization problem now transforms \eqref{Eq_Dirichlet_bounded_8} into
\begin{equation*}
\Sc q_s^\mathsmaller{D}(u_f,u_f)\geq\Vert u_f\Vert_{L^2}^2\begin{cases} |s|^2-\frac{2M_a|s_0|}{C_P}+\frac{M_a^2}{C_P^2K_a^2}, & \text{if }|s_0|\leq\frac{M_a}{C_PK_a^2}, \\ |s|^2-K_a^2s_0^2, & \text{if }|s_0|\geq\frac{M_a}{C_PK_a^2}, \end{cases}=\kappa_0(s)\Vert u_f\Vert_{L^2}^2,
\end{equation*}
where in the second equality we used the constant $\kappa_0(s)$ from \eqref{Eq_kappa0}. Combining this inequality with \eqref{Eq_Dirichlet_7}, then gives the final $L^2$-estimate of the solution
\begin{equation}\label{Eq_Dirichlet_bounded_2}
\Vert u_f\Vert_{L^2}^2\leq\frac{\Sc q_s^\mathsmaller{D}(u_f,u_f)}{\kappa_0(s)}\leq\frac{\Vert f\Vert_{L^2}\Vert u_f\Vert_{L^2}}{\kappa_0(s)},
\end{equation}
which is exactly the $\Vert\cdot\Vert_{L^2}$-norm estimate in \eqref{Eq_Estimate_Dirichlet_bounded}. For the $\Vert\cdot\Vert_D$-estimate of the solution $u_f$, we once more use the coercivity \eqref{Eq_Dirichlet_bounded_4}, use the nonnegativity of the second term, and get
\begin{equation*}
\Sc q_s^\mathsmaller{D}(u_f,u_f)\geq\Big(\frac{M_a^2}{K_a^2}-\frac{\alpha C_PM_a|s_0|}{\delta}-2(1-\alpha)C_PM_a|s_0|\Big)\Vert u_f\Vert_D^2.
\end{equation*}
Also this lower bound will now be maximized with respect to the free parameters $\alpha$ and $\delta$. Maximizing with respect to $\delta$, means choosing the maximal possible value, i.e. the limit $\delta\to\delta_1(\alpha)=\frac{C_P|s|^2}{\alpha M_a|s_0|}$. This gives
\begin{equation}\label{Eq_Dirichlet_bounded_3}
\Sc q_s^\mathsmaller{D}(u_f,u_f)\geq\Big(\frac{M_a^2}{K_a^2}-2C_PM_a|s_0|+\alpha C_PM_a|s_0|\Big(2-\frac{\alpha M_a|s_0|}{C_P|s|^2}\Big)\Big)\Vert u_f\Vert_D^2.
\end{equation}
The maximization problem \eqref{Eq_Maximization_problem1} with the values \eqref{Eq_Dirichlet_bounded_6} gives
\begin{equation*}
\sup\limits_{\alpha\in(\alpha_0,\alpha_1)}\alpha\Big(2-\frac{\alpha M_a|s_0|}{C_P|s|^2}\Big)=\begin{cases} \frac{C_P|s|^2}{M_a|s_0|}, & \text{if }|s|^2\leq\frac{M_a|s_0|}{C_P}, \\ 2-\frac{M_a|s_0|}{C_P|s|^2}, & \text{if }|s|^2\geq\frac{M_a|s_0|}{C_P}. \end{cases}
\end{equation*}
This maximization problem allows us to also maximize \eqref{Eq_Dirichlet_3} with respect to $\alpha$, and get
\begin{equation*}
\Sc q_s^\mathsmaller{D}(u_f,u_f)\geq\Vert u_f\Vert_D^2\begin{cases} \frac{M_a^2}{K_a^2}-2C_PM_a|s_0|+C_P^2|s|^2, & \text{if }|s|^2\leq\frac{M_a|s_0|}{C_P}, \\ \frac{M_a^2}{K_a^2}-\frac{M_a^2s_0^2}{|s|^2}, & \text{if }|s|^2\geq\frac{M_a|s_0|}{C_P}, \end{cases}=\kappa_1(s)\Vert u_f\Vert_D^2,
\end{equation*}
where in the second equality we used the constant $\kappa_1(s)$ from \eqref{Eq_kappa1}. Rearranging now this inequality and using again \eqref{Eq_Dirichlet_7} and the already calculated $L^2$-estimate \eqref{Eq_Dirichlet_bounded_2}, finally gives the $\Vert\cdot\Vert_D$-estimate of the solution
\begin{equation*}
\Vert u_f\Vert_D^2\leq\frac{\Sc q_s^\mathsmaller{D}(u_f,u_f)}{\kappa_1(s)}\leq\frac{\Vert f\Vert_{L^2}\Vert u_f\Vert_{L^2}}{\kappa_1(s)}\leq\frac{\Vert f\Vert_{L^2}^2}{\kappa_0(s)\kappa_1(s)}. \qedhere
\end{equation*}
\end{proof}

\section{Robin-like boundary conditions}\label{sec_Robin}

In this section we study the weak formulation of the spectral problem \eqref{Eq_Qs_spectral_problem} with Robin-like boundary conditions
\begin{equation}\label{Eq_Robin_boundary_conditions}
\sum_{i=1}^n\nu_ia_i^2\frac{\partial u}{\partial x_i}+bu=0,\qquad\text{on }\partial\Omega.
\end{equation}
The domain $\Omega\subseteq\mathbb{R}^n$, in $n\geq 3$ dimensions is assumed to be minimally smooth in the sense of Stein \cite[Section VI.3.3]{S70}. Precisely, if we plug in the boundary conditions \eqref{Eq_Robin_boundary_conditions} into \eqref{Eq_qs_formal}, means that for every $s\in\mathbb{R}^{n+1}$ we are dealing with form
\begin{equation}\label{Eq_qs_Robin}
q_s^\mathsmaller{R}(u,v):=\sum\limits_{i=1}^n\Big\langle\frac{\partial u}{\partial x_i},a_i^2\frac{\partial v}{\partial x_i}+(2s_0a_i-B_i)e_iv\Big\rangle_{L^2}+|s|^2\langle u,v\rangle_{L^2}+\langle b\,\tau_\mathsmaller{D}u,\tau_\mathsmaller{D}v\rangle_{L^2(\partial\Omega)},
\end{equation}
defined on $\dom q_s^\mathsmaller{R}:=H^1(\Omega)\times H^1(\Omega)$. Here $\tau_\mathsmaller{D}:H^1(\Omega)\rightarrow L^2(\partial\Omega)$ is the bounded Dirichlet trace operator, see \cite[Equation (4)]{M87}.

\begin{prob}
Determine regularity and growth conditions on the coefficients $a_1,\dots,a_n$ in \eqref{Eq_T}, the Robin parameter $b$ in \eqref{Eq_Robin_boundary_conditions} and on the spectral parameter $s\in\mathbb{R}^{n+1}$, such that for every $f\in L^2(\Omega)$ there exists a unique solution $u_f\in H^1(\Omega)$ of the problem
\begin{equation*}
q_s^\mathsmaller{R}(u_f,v)=\langle f,v\rangle_{L^2},\qquad\text{for all }v\in H^1(\Omega).
\end{equation*}
Furthermore, determine the $L^2$- and $H^1$-estimates of $u_f$, depending on the parameter $s$.
\end{prob}

\begin{thm}\label{thm_Robin}
Let $b\in L^\infty(\partial\Omega)$ be real valued and $a_1,\dots,a_n\in C^1(\overline{\Omega})$ be positive coefficients. With the Sobolev constant $C_S\geq 0$ from \eqref{Eq_Sobolev_inequality_H1} and the upper bound $M_a'$ from \eqref{Eq_Maprime}, we assume that there holds
\begin{equation*}
m_a^2>C_SM_a'+\Vert\hspace{-0.05cm}\min\{b,0\}\Vert_{L^\infty(\partial\Omega)}\Vert\tau_\mathsmaller{D}\Vert^2=:D_{a,b}.
\end{equation*}
We now consider $s\in\mathbb{R}^{n+1}$ with
\begin{equation}\label{Eq_Assumption_Robin}
|s|^2>\frac{M_a^2s_0^2}{m_a^2-D_{a,b}}+D_{a,b}.
\end{equation}
Then for every $f\in L^2(\Omega)$ there exists a unique $u_f\in H^1(\Omega)$ such that
\begin{equation}\label{Eq_Solution_Robin}
q_s^\mathsmaller{R}(u_f,v)=\langle f,v\rangle_{L^2},\qquad\text{for all }v\in H^1(\Omega).
\end{equation}
Moreover, this solution satisfies the bound
\begin{equation}\label{Eq_Estimate_Robin}
\Vert u_f\Vert_{H^1}\leq\frac{2\Vert f\Vert_{L^2}}{|s|^2+m_a^2-2D_{a,b}-\sqrt{(|s|^2-m_a^2)^2+4M_a^2s_0^2}}.
\end{equation}
\end{thm}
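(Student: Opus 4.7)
The plan is to apply the Clifford Lax--Milgram lemma (Lemma~\ref{lem_Lax_Milgram}) to $q_s^\mathsmaller{R}$ on the Hilbert space $H^1(\Omega)$, in analogy with the Dirichlet case. Boundedness is proved as in \eqref{Eq_Dirichlet_11}, the only differences being that the $H^1$-Sobolev inequality \eqref{Eq_Sobolev_inequality_H1} replaces \eqref{Eq_Sobolev_inequality_H01} and the boundary term is controlled via $|\langle b\,\tau_\mathsmaller{D}u,\tau_\mathsmaller{D}v\rangle_{L^2(\partial\Omega)}|\leq\|b\|_{L^\infty(\partial\Omega)}\|\tau_\mathsmaller{D}\|^2\|u\|_{H^1}\|v\|_{H^1}$. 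The nontrivial step is the coercivity of $\Sc q_s^\mathsmaller{R}$, which has to absorb both the Sobolev correction and the negative part of $b$ on the boundary.

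Repeating the calculation of \eqref{Eq_Dirichlet_1} but using \eqref{Eq_Sobolev_inequality_H1}, I expect the $B_i$-term to satisfy
\begin{equation*}
\Big|\sum_{i=1}^n\Sc\Big\langle\tfrac{\partial u}{\partial x_i},B_ie_iu\Big\rangle_{L^2}\Big|\leq C_SM_a'\|u\|_D\|u\|_{H^1}\leq C_SM_a'\|u\|_{H^1}^2,
\end{equation*}
where the last inequality uses $\|u\|_D\leq\|u\|_{H^1}$. Writing $b=b^+-b^-$ with $b^-=-\min\{b,0\}\geq 0$, the boundary term is bounded from below by
\begin{equation*}
\Sc\langle b\,\tau_\mathsmaller{D}u,\tau_\mathsmaller{D}u\rangle_{L^2(\partial\Omega)}=\int_{\partial\Omega}b|u|^2\,dS\geq-\|\min\{b,0\}\|_{L^\infty(\partial\Omega)}\|\tau_\mathsmaller{D}\|^2\|u\|_{H^1}^2.
\end{equation*}
Decomposing $\|u\|_{H^1}^2=\|u\|_D^2+\|u\|_{L^2}^2$ and combining these estimates with the remaining positive contributions to $\Sc q_s^\mathsmaller{R}(u,u)$ should produce
\begin{equation*}
\Sc q_s^\mathsmaller{R}(u,u)\geq(m_a^2-D_{a,b})\|u\|_D^2-2M_a|s_0|\|u\|_D\|u\|_{L^2}+(|s|^2-D_{a,b})\|u\|_{L^2}^2.
\end{equation*}

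The right-hand side is a quadratic form in $(\|u\|_D,\|u\|_{L^2})\in\mathbb{R}^2_{\geq 0}$ with associated matrix $\bigl(\begin{smallmatrix}m_a^2-D_{a,b}&-M_a|s_0|\\-M_a|s_0|&|s|^2-D_{a,b}\end{smallmatrix}\bigr)$, whose smallest eigenvalue is
\begin{equation*}
\lambda_-(s)=\tfrac12\bigl(|s|^2+m_a^2-2D_{a,b}-\sqrt{(|s|^2-m_a^2)^2+4M_a^2s_0^2}\bigr).
\end{equation*}
A direct expansion shows that $\lambda_-(s)>0$ is equivalent to $(m_a^2-D_{a,b})(|s|^2-D_{a,b})>M_a^2s_0^2$, which under the hypothesis $m_a^2>D_{a,b}$ is exactly the spectral assumption \eqref{Eq_Assumption_Robin}. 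This gives $\Sc q_s^\mathsmaller{R}(u,u)\geq\lambda_-(s)\|u\|_{H^1}^2$, and Lemma~\ref{lem_Lax_Milgram} delivers the unique $u_f\in H^1(\Omega)$ solving \eqref{Eq_Solution_Robin}. Testing $v=u_f$ together with Lemma~\ref{lem_Properties}~iv) produces $\lambda_-(s)\|u_f\|_{H^1}^2\leq\Sc\langle f,u_f\rangle_{L^2}\leq\|f\|_{L^2}\|u_f\|_{H^1}$, which rearranges into the claimed estimate \eqref{Eq_Estimate_Robin}.

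The main obstacle is precisely the coercivity step: since only the $H^1$-Sobolev inequality is available on $H^1(\Omega)$, the Sobolev correction $C_SM_a'$ cannot be confined to the coefficient of $\|u\|_D^2$ as it was in the Dirichlet case, and must instead be spread symmetrically across both diagonal entries together with the trace correction $\|\min\{b,0\}\|_{L^\infty(\partial\Omega)}\|\tau_\mathsmaller{D}\|^2$. This forces the joint appearance of $D_{a,b}$ in both diagonal entries and thereby fixes the precise shape of both \eqref{Eq_Assumption_Robin} and the denominator in \eqref{Eq_Estimate_Robin}.
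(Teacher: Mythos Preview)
Your proposal is correct and follows essentially the same route as the paper: verify boundedness of $q_s^\mathsmaller{R}$ as in \eqref{Eq_Dirichlet_11}, bound the boundary term below via $\min\{b,0\}$ and the trace norm, use the $H^1$-Sobolev inequality \eqref{Eq_Sobolev_inequality_H1} for the $B_i$-term, and arrive at the quadratic lower bound $(m_a^2-D_{a,b})\|u\|_D^2-2M_a|s_0|\|u\|_D\|u\|_{L^2}+(|s|^2-D_{a,b})\|u\|_{L^2}^2$. The only cosmetic difference is that the paper extracts the coercivity constant by applying Young's inequality $\|u\|_D\|u\|_{L^2}\leq\frac{1}{2\delta}\|u\|_D^2+\frac{\delta}{2}\|u\|_{L^2}^2$ and choosing $\delta$ so that the two coefficients match, whereas you read off the smallest eigenvalue $\lambda_-(s)$ of the associated $2\times 2$ symmetric matrix directly; both procedures yield exactly $\frac{1}{2}\bigl(|s|^2+m_a^2-2D_{a,b}-\sqrt{(|s|^2-m_a^2)^2+4M_a^2s_0^2}\bigr)$ and the same positivity condition \eqref{Eq_Assumption_Robin}.
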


\begin{figure}[H]
\begin{center}
\begin{tikzpicture}[scale=3]
\draw[ultra thick,fill=black!70] plot[domain=-1.5:1.5] (\x,{(0.23*(\x)^2+0.03)^(0.5)});
\draw[ultra thick,fill=black!70] plot[domain=-1.5:1.5] (\x,{-(0.23*(\x)^2+0.03)^(0.5)});
\draw[fill=black!60] plot[domain=-1.174:1.174] (\x,{(0.368*(\x)^2+0.043)^(0.5)});
\draw[fill=black!60] plot[domain=-1.174:1.174] (\x,{-(0.368*(\x)^2+0.043)^(0.5)});
\draw[fill=black!50] plot[domain=-0.958:0.958] (\x,{(0.538*(\x)^2+0.056)^(0.5)});
\draw[fill=black!50] plot[domain=-0.958:0.958] (\x,{-(0.538*(\x)^2+0.056)^(0.5)});
\draw[fill=black!40] plot[domain=-0.667:0.667] (\x,{(1.051*(\x)^2+0.082)^(0.5)});
\draw[fill=black!40] plot[domain=-0.667:0.667] (\x,{-(1.051*(\x)^2+0.082)^(0.5)});
\draw[fill=black!30] plot[domain=-0.461:0.461] (\x,{(2.077*(\x)^2+0.108)^(0.5)});
\draw[fill=black!30] plot[domain=-0.461:0.461] (\x,{-(2.077*(\x)^2+0.108)^(0.5)});
\draw[fill=black!20] plot[domain=-0.284:0.284] (\x,{(5.154*(\x)^2+0.134)^(0.5)});
\draw[fill=black!20] plot[domain=-0.284:0.284] (\x,{-(5.154*(\x)^2+0.134)^(0.5)});
\draw[fill=black!10] plot[domain=-0.167:0.167] (\x,{(14.384*(\x)^2+0.15)^(0.5)});
\draw[fill=black!10] plot[domain=-0.167:0.167] (\x,{-(14.384*(\x)^2+0.15)^(0.5)});
\draw[ultra thick,white] (-1.55,0.741)--(1.55,0.741);
\draw[ultra thick,white] (-1.55,-0.741)--(1.55,-0.741);
\draw[->] (-1.5,0)--(1.5,0) node[anchor=south] {\tiny{$s_0$}};
\draw[->] (0,-0.8)--(0,0.85) node[anchor=west] {\tiny{$\Im(s)$}};
\draw (0,0.11) node[anchor=west] {\tiny{$\sqrt{D_{a,b}}$}};
\draw (1.3,0.5) node[anchor=north] {\tiny{$|s|^2=\frac{M_a^2s_0^2}{m_a^2-D_{a,b}}+D_{a,b}$}};
\draw (-0.03,0.43)--(0.03,0.43);
\draw (0,0.5) node[anchor=west] {$m_a$};
\end{tikzpicture}
\end{center}
\caption{\small Since the $S$-spectrum is axially symmetric we only visualize its intersection with the plane $\mathbb{C}_J$ for some arbitrary $J\in\mathbb{S}$. Here it is the two lines given by $|s|^2=\frac{M_a^2s_0^2}{m_a^2-D_{a,b}}+D_{a,b}$ from the condition \eqref{Eq_Assumption_Robin}, which separate the dark region, only consisting of points in the $S$-resolvent set, from the white region, (in general strictly) containing $\sigma_S(T)$. The gap between the two regions is of size $2\sqrt{D_{a,b}}$ and comes on the one hand from the negative part $\min\{b,0\}$ of the Robin parameter and also from the upper bound $M_a'$ of the derivative of the coefficients. The picture also visualizes the level lines of the coefficients $\tau(s):=2\big(|s|^2+m_a^2-2D_{a,b}-\sqrt{(|s|^2-m_a^2)^2+4M_a^2s_0^2}\big)^{-1}$ in the estimate \eqref{Eq_Estimate_Robin}. Here, dark colors indicate large values, light colors indicate small values. In particular, one sees that $\tau$ tends to infinity as $s$ approaches the boundary of the dark region. Also the decay of $\tau$ for $|s|\to\infty$ can be seen.}
\end{figure}

\begin{proof}
For the proof of the weak solvability \eqref{Eq_Solution_Robin}, we will verify that the form $q_s^\mathsmaller{R}$ satisfies the assumptions of the Lemma~\ref{lem_Lax_Milgram}. We start by estimating the third term in \eqref{Eq_qs_Robin} by
\begin{align*}
|\langle b\,\tau_\mathsmaller{D}u,\tau_\mathsmaller{D}v\rangle_{L^2(\partial\Omega)}|&\leq 2^{\frac{n}{2}}\Vert b\Vert_{L^\infty(\partial\Omega)}\Vert\tau_\mathsmaller{D}u
\Vert_{L^2(\partial\Omega)}\Vert\tau_\mathsmaller{D}v\Vert_{L^2(\partial\Omega)} \\
&\leq 2^{\frac{n}{2}}\Vert b\Vert_{L^\infty(\partial\Omega)}\Vert\tau_\mathsmaller{D}\Vert^2\Vert u\Vert_{H^1}\Vert v\Vert_{H^1}.
\end{align*}
Moreover, similar to \eqref{Eq_Dirichlet_5}, we can use the Sobolev inequality \eqref{Eq_Sobolev_inequality_H1}, to estimate
\begin{equation}\label{Eq_Robin_4}
\sum_{i=1}^n\Vert B_ie_iv\Vert_{L^2}^2\leq\sum_{i,j=1}^n\Big\Vert a_j\frac{\partial a_i}{\partial x_j}\Big\Vert_{L^n}^2\Vert v\Vert_{L^{\frac{2n}{n-2}}}^2\leq C_S^2\sum_{i,j=1}^n\Big\Vert a_j\frac{\partial a_i}{\partial x_j}\Big\Vert_{L^n}^2\Vert v\Vert_{H^1}^2\leq C_S^2M_a'^2\Vert v\Vert^2_{H^1}.
\end{equation}
Equipped with these two estimates, we arrive in the same way as in \eqref{Eq_Dirichlet_11} with the boundedness
\begin{equation}\label{Eq_Robin_3}
|q_s^\mathsmaller{R}(u,v)|\leq 2^{\frac{n}{2}}\big(M_a^2+2M_a|s_0|+C_SM_a'+|s|^2+\Vert b\Vert_{L^\infty(\partial\Omega)}\Vert\tau_\mathsmaller{D}\Vert^2\big)\Vert u\Vert_{H^1}\Vert v\Vert_{H^1}.
\end{equation}
For the coercivity of the scalar part $\Sc q_s^\mathsmaller{R}$, we start with the estimate
\begin{align*}
\Sc\langle b\,\tau_\mathsmaller{D}u,\tau_\mathsmaller{D}u\rangle_{L^2(\partial\Omega)}&=\int_{\partial\Omega}b\,|\tau_\mathsmaller{D}u|^2d\sigma\geq\int_{\partial\Omega}\min\{b,0\}|\tau_\mathsmaller{D}u|^2d\sigma \\
&\geq-\Vert\hspace{-0.05cm}\min\{b,0\}\Vert_{L^\infty(\partial\Omega)}\Vert\tau_\mathsmaller{D}u\Vert_{L^2(\partial\Omega)}^2\geq-\Vert\hspace{-0.05cm}\min\{b,0\}\Vert_{L^\infty(\partial\Omega)}\Vert\tau_\mathsmaller{D}\Vert^2\Vert u\Vert_{H^1}^2.
\end{align*}
Using also the estimate \eqref{Eq_Robin_4}, we end up with the coercivity
\begin{align*}
\Sc q_s^\mathsmaller{R}(u,u)&\geq\sum_{i=1}^n\bigg(m_a^2\Big\Vert\frac{\partial u}{\partial x_i}\Big\Vert_{L^2}^2-2M_a|s_0|\Big\Vert\frac{\partial u}{\partial x_i}\Big\Vert_{L^2}\Vert u\Vert_{L^2}-\Big\Vert\frac{\partial u}{\partial x_i}\Big\Vert_{L^2}\Vert B_ie_iu\Vert_{L^2}\bigg) \\
&\quad+|s|^2\Vert u\Vert_{L^2}^2+\Sc\langle b\,\tau_\mathsmaller{D}u,\tau_\mathsmaller{D}u\rangle_{L^2(\partial\Omega)} \\
&\geq m_a^2\Vert u\Vert_D^2-2M_a|s_0|\Vert u\Vert_D\Vert u\Vert_{L^2}-C_SM_a'\Vert u\Vert_D\Vert u\Vert_{H^1} \\
&\quad+|s|^2\Vert u\Vert_{L^2}^2-\Vert\hspace{-0.05cm}\min\{b,0\}\Vert_{L^\infty(\partial\Omega)}\Vert\tau_\mathsmaller{D}\Vert^2\Vert u\Vert_{H^1}^2 \\
&\geq m_a^2\Vert u\Vert_D^2-2M_a|s_0|\Vert u\Vert_D\Vert u\Vert_{L^2}+|s|^2\Vert u\Vert_{L^2}^2-D_{a,b}\Vert u\Vert_{H^1}^2.
\end{align*}
For any $\delta>0$ we now use Young's inequality
\begin{equation*}
\Vert u\Vert_D\Vert u\Vert_{L^2}\leq\frac{1}{2\delta}\Vert u\Vert_D^2+\frac{\delta}{2}\Vert u\Vert_{L^2}^2,
\end{equation*}
to further estimate this lower bound by
\begin{equation}\label{Eq_Robin_2}
\Sc q_s^\mathsmaller{R}(u,u)\geq\Big(m_a^2-\frac{M_a|s_0|}{\delta}\Big)\Vert u\Vert_D^2+(|s|^2-\delta M_a|s_0|)\Vert u\Vert_{L^2}^2-D_{a,b}\Vert u\Vert_{H^1}^2.
\end{equation}
The optimal choice on $\delta$ to get an $H^1$-lower bound, is when the two brackets in \eqref{Eq_Robin_2}, i.e. for
\begin{equation*}
\delta=\frac{|s|^2-m_a^2+\sqrt{(|s|^2-m_a^2)^2+4M_a^2s_0^2}}{2M_a|s_0|}.
\end{equation*}
Clearly, there is $\delta>0$ and if we plug it into \eqref{Eq_Robin_2}, we get
\begin{equation}\label{Eq_Robin_5}
\Sc q_s^\mathsmaller{R}(u,u)\geq\frac{1}{2}\Big(|s|^2+m_a^2-2D_{a,b}-\sqrt{(|s|^2-m_a^2)^2+4M_a^2s_0^2}\Big)\Vert u\Vert_{H^1}^2.
\end{equation}
This bracket term is positive if and only if
\begin{align}
&|s|^2+m_a^2-2D_{a,b}-\sqrt{(|s|^2-m_a^2)^2+4M_a^2s_0^2}>0 \notag \\
&\hspace{2cm}\Leftrightarrow\quad\big(|s|^2+m_a^2-2D_{a,b}\big)^2>(|s|^2-m_a^2)^2+4M_a^2s_0^2\quad\&\quad|s|^2+m_a^2>2D_{a,b} \notag \\
&\hspace{2cm}\Leftrightarrow\quad|s|^2>\frac{M_a^2s_0^2}{m_a^2-D_{a,b}}+D_{a,b}\quad\&\quad|s|^2+m_a^2>2D_{a,b} \notag \\
&\hspace{2cm}\Leftrightarrow\quad|s|^2>\frac{M_a^2s_0^2}{m_a^2-D_{a,b}}+D_{a,b}, \label{Eq_Robin_1}
\end{align}
where the last equivalence follows from $\frac{M_a^2s_0^2}{m_a^2-D_{a,b}}+D_{a,b}\geq 2D_{a,b}-m_a^2$, which is true by \eqref{Eq_Assumption_Robin}.  However, the condition \eqref{Eq_Robin_1} is exactly the stated assumption \eqref{Eq_Assumption_Robin}, which means that we have proven the coercivity of the Robin form $q_s^\mathsmaller{R}$. Let now $f\in L^2(\Omega)$, and consider the functional $\varphi_f(v):=\langle f,v\rangle_{L^2}$, $v\in H^1(\Omega)$, which is bounded in $H^1(\Omega)$ by the same estimate as in \eqref{Eq_Dirichlet_3}. Lemma~\ref{lem_Lax_Milgram} then states the existence of a unique $u_f\in H^1(\Omega)$, which satisfies
\begin{equation*}
q_s^\mathsmaller{R}(u_f,v)=\varphi_f(v)=\langle f,v\rangle_{L^2},\qquad\text{for all }v\in H^1(\Omega).
\end{equation*}
Testing this equation with $v=u_f$, using the coercivity estimate \eqref{Eq_Robin_5} as well as the analogous estimate \eqref{Eq_Dirichlet_7} for $q_s^\mathsmaller{R}$, gives the $H^1$-norm estimate of the solution
\begin{equation*}
\Vert u_f\Vert_{H^1}^2\leq\frac{2\Vert f\Vert_{L^2}\Vert u_f\Vert_{L^2}}{|s|^2+m_a^2-2D_{a,b}-\sqrt{(|s|^2-m_a^2)^2+4M_a^2s_0^2}},
\end{equation*}
which is exactly \eqref{Eq_Estimate_Robin} if we estimate $\Vert u_f\Vert_{L^2}\leq\Vert u_f\Vert_{H^1}$ on the right hand side.
\end{proof}

If the coefficients $a_1,\dots,a_n$ of the operator $T$ in \eqref{Eq_T} are constant, and if the Robin parameter $b\geq 0$ is nonnegative, which in particular includes Neumann boundary conditions $b\equiv 0$, the results of Theorem~\ref{thm_Robin} can be refined. In particular the gap at $s=0$ closes, see Figure~3. Also observe that that the norm estimate \eqref{Eq_Estimate_Robin} of the solution can be improved.

\begin{cor}
Consider $b\in L^\infty(\partial\Omega)$ with $b\geq 0$ and constant coefficients $a_1,\dots,a_n\in(0,\infty)$ in \eqref{Eq_T}. In this case we have $m_a=\min\{a_1,\dots,a_n\}$, $M_a=\max\{a_1,\dots,a_n\}$ and $M_a'=0$, for the values of \eqref{Eq_Coefficient_bounds}. Let us consider $s\in\mathbb{R}^{n+1}$ with
\begin{equation}\label{Eq_Assumption_Robin_constant}
|s|>\frac{M_a}{m_a}|s_0|.
\end{equation}
Then, for every $f\in L^2(\Omega)$, there exists a unique $u_f\in H^1(\Omega)$ such that
\begin{equation}\label{Eq_Solution_Robin_constant}
q_s^\mathsmaller{R}(u_f,v)=\langle f,v\rangle_{L^2},\qquad\text{for all }v\in H^1(\Omega).
\end{equation}
Moreover, this solution satisfies the bounds
\begin{equation}\label{Eq_Estimate_Robin_constant}
\Vert u_f\Vert_{L^2}\leq\frac{\Vert f\Vert_{L^2}}{|s|^2-\frac{M_a^2s_0^2}{m_a^2}}\qquad\text{and}\qquad\Vert u_f\Vert_D\leq\frac{|s|\,\Vert f\Vert_{L^2}}{m_a\big(|s|^2-\frac{M_a^2s_0^2}{m_a^2}\big)}.
\end{equation}
The visualization of these estimates is the same as in Figure~1 with $K_a=\frac{M_a}{m_a}$.
\end{cor}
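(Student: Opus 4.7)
The plan is to observe that in the constant-coefficient case with nonnegative Robin parameter the form $q_s^{\mathsmaller{R}}$ reduces to essentially the same expression handled in Theorem~\ref{thm_Dirichlet}, only posed on $H^1(\Omega)$ rather than $H_0^1(\Omega)$. Indeed, constant $a_i$ forces $B_i=0$ in \eqref{Eq_Bi} and $M_a'=0$ in \eqref{Eq_Maprime}, eliminating the first-order perturbation terms, while $b\geq 0$ and real makes the boundary term $\langle b\,\tau_{\mathsmaller{D}}u,\tau_{\mathsmaller{D}}u\rangle_{L^2(\partial\Omega)}=\int_{\partial\Omega}b|u|^2\,d\sigma$ nonnegative and therefore safely droppable from any coercivity lower bound. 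I will verify the hypotheses of Lemma~\ref{lem_Lax_Milgram} on $H^1(\Omega)\times H^1(\Omega)$ and then read off the $L^2$- and $\Vert\cdot\Vert_D$-bounds.

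Boundedness of $q_s^{\mathsmaller{R}}$ on $H^1(\Omega)\times H^1(\Omega)$ is the same calculation as \eqref{Eq_Robin_3}, which already accommodates the trace term, so nothing new is required there. The heart of the proof is the coercivity of $\operatorname{Sc}q_s^{\mathsmaller{R}}$. Feeding the simplifications into the computation that led to \eqref{Eq_Dirichlet_1}, and dropping the nonnegative boundary contribution, yields
\begin{equation*}
\operatorname{Sc}q_s^{\mathsmaller{R}}(u,u)\geq m_a^2\Vert u\Vert_D^2-2M_a|s_0|\Vert u\Vert_D\Vert u\Vert_{L^2}+|s|^2\Vert u\Vert_{L^2}^2.
\end{equation*}
This is precisely \eqref{Eq_Dirichlet_1} with $C_SM_a'=0$, so the effective constant is $K_a=M_a/m_a$, and the hypothesis \eqref{Eq_Assumption_Robin_constant} coincides with \eqref{Eq_Assumption_Dirichlet} under that identification. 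Young's inequality applied with $\delta$ in the nonempty interval $(M_a|s_0|/m_a^2,\,|s|^2/(M_a|s_0|))$ (the convention being $\delta_1=\infty$ when $s_0=0$) then produces strict $H^1$-coercivity exactly as in \eqref{Eq_Dirichlet_2}.

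Lemma~\ref{lem_Lax_Milgram} therefore delivers the unique $u_f\in H^1(\Omega)$ satisfying \eqref{Eq_Solution_Robin_constant}. Testing with $v=u_f$ and using $|\operatorname{Sc}\langle f,u_f\rangle_{L^2}|\leq\Vert f\Vert_{L^2}\Vert u_f\Vert_{L^2}$ from Lemma~\ref{lem_Properties}~iv), the $L^2$-estimate follows by letting $\delta\to M_a|s_0|/m_a^2$ in the coercivity bound and the $\Vert\cdot\Vert_D$-estimate by letting $\delta\to|s|^2/(M_a|s_0|)$, which are the same two limits performed at the end of the proof of Theorem~\ref{thm_Dirichlet}. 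Substituting $K_a=M_a/m_a$ into \eqref{Eq_Estimate_Dirichlet} then gives the claimed \eqref{Eq_Estimate_Robin_constant}. There is really no new obstacle: the only subtlety worth noting is that every step in Theorem~\ref{thm_Dirichlet} leading to \eqref{Eq_Dirichlet_1} relies solely on the product rule and the anticommutation $e_je_i=-e_ie_j$, never on vanishing of the Dirichlet trace, so those steps transfer unchanged to the Robin setting once the nonnegative boundary term has been discarded.
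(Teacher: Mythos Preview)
Your argument is correct and essentially coincides with the paper's: both observe that $M_a'=0$ and $b\geq 0$ kill the perturbative and negative boundary contributions, leaving the clean coercivity lower bound $m_a^2\Vert u\Vert_D^2-2M_a|s_0|\Vert u\Vert_D\Vert u\Vert_{L^2}+|s|^2\Vert u\Vert_{L^2}^2$, and then optimize Young's inequality with the two endpoint choices $\delta=M_a|s_0|/m_a^2$ and $\delta=|s|^2/(M_a|s_0|)$ to extract the $L^2$- and $\Vert\cdot\Vert_D$-estimates. The only cosmetic difference is that the paper obtains existence and uniqueness by specializing Theorem~\ref{thm_Robin} (noting $D_{a,b}=0$ collapses \eqref{Eq_Assumption_Robin} to \eqref{Eq_Assumption_Robin_constant}) and then returns to the intermediate estimate \eqref{Eq_Robin_2} from that proof, whereas you invoke Lax--Milgram afresh and pattern-match against the Dirichlet argument; the underlying computation is identical.
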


\begin{proof}
Since we assumed $b\geq 0$ and $a_1,\dots,a_n\in(0,\infty)$ being positive and constant, it is clear that $\Vert\hspace{-0.05cm}\min\{b,0\}\Vert_{L^\infty(\partial\Omega)}=0$ and $M_a'=0$. Hence also the constant $D_{a,b}=0$ in Theorem~\ref{thm_Robin} vanishes and the condition \eqref{Eq_Assumption_Robin} turns into \eqref{Eq_Assumption_Robin_constant}. This immediately yields the unique solvability \eqref{Eq_Solution_Robin_constant} from Theorem~\ref{thm_Robin}. Moreover, to verify the estimates \eqref{Eq_Estimate_Robin_constant}, we note that the coercivity estimate \eqref{Eq_Robin_2} looks like
\begin{equation}\label{Eq_Robin_constant_1}
\Sc q_s^\mathsmaller{R}(u_f,u_f)\geq\Big(m_a^2-\frac{M_a|s_0|}{\delta}\Big)\Vert u_f\Vert_D^2+(|s|^2-\delta M_a|s_0|)\Vert u_f\Vert_{L^2}^2,
\end{equation}
for every $\delta>0$. For the $L^2$-estimate in \eqref{Eq_Estimate_Robin_constant}, we choose $\delta=\frac{M_a|s_0|}{m_a^2}$, which turns \eqref{Eq_Robin_constant_1} into
\begin{equation*}
\Sc q_s^\mathsmaller{R}(u_f,u_f)\geq\Big(|s|^2-\frac{M_a^2s_0^2}{m_a^2}\Big)\Vert u_f\Vert_{L^2}^2.
\end{equation*}
Together with the analog estimate \eqref{Eq_Dirichlet_7} for $q_s^\mathsmaller{R}$, this leads to the stated bound
\begin{equation*}
\Vert u_f\Vert_{L^2}^2\leq\frac{\Sc q_s^\mathsmaller{R}(u_f,u_f)}{|s|^2-\frac{M_a^2s_0^2}{m_a^2}}\leq\frac{\Vert f\Vert_{L^2}\Vert u_f\Vert_{L^2}}{|s|^2-\frac{M_a^2s_0^2}{m_a^2}}.
\end{equation*}
For the $\Vert\cdot\Vert_D$-estimate in \eqref{Eq_Estimate_Robin_constant}, we choose $\delta=\frac{|s|^2}{M_a|s_0|}$, which turns \eqref{Eq_Robin_constant_1} into
\begin{equation*}
\Sc q_s^\mathsmaller{R}(u_f,u_f)\geq\frac{m_a^2}{|s|^2}\Big(|s|^2-\frac{M_a^2s_0^2}{m_a^2}\Big)\Vert u_f\Vert_D^2.
\end{equation*}
Together with the analog estimate \eqref{Eq_Dirichlet_7} for $q_s^\mathsmaller{R}$, this leads to the second bound in \eqref{Eq_Estimate_Robin_constant}, namely
\begin{equation*}
\Vert u_f\Vert_D^2\leq\frac{|s|^2\Sc q_s^\mathsmaller{R}(u_f,u_f)}{m_a^2\big(|s|^2-\frac{M_a^2s_0^2}{m_a^2}\big)}\leq\frac{|s|^2\Vert f\Vert_{L^2}\Vert u_f\Vert_{L^2}}{m_a^2\big(|s|^2-\frac{M_a^2s_0^2}{m_a^2}\big)}\leq\frac{|s|^2\Vert f\Vert_{L^2}^2}{m_a^2\big(|s|^2-\frac{M_a^2s_0^2}{m_a^2}\big)^2},
\end{equation*}
where in the last inequality we used the already calculated $L^2$-estimate in \eqref{Eq_Estimate_Robin_constant}.
\end{proof}

\section{An auxiliary maximization problem}\label{sec_Maximization_problem}

In this section we collect an auxiliary maximization problem that we have used in the proofs of the main theorem.

\begin{lem}\label{lem_Maximization_problem}
For every $x,y>0$, there holds the equivalence
\begin{equation}\label{Eq_Equivalence}
\exists\alpha\in(0,1): (x-2)y+2\alpha y-\alpha^2>0 \Leftrightarrow y>\begin{cases} \frac{1}{x}, & \text{if }x\leq 1, \\ 2-x, & \text{if }x\geq 1. \end{cases}
\end{equation}
More precisely, if the two equivalent conditions in \eqref{Eq_Equivalence} are satisfied, then $x+y>2$, and
\begin{equation}\label{Eq_Interval_of_positivity}
\Set{\alpha\in(0,1) | (x-2)y+2\alpha y-\alpha^2>0}=(\alpha_0,\alpha_1),
\end{equation}
where $\alpha_0<\alpha_1\in[0,1]$ are given by
\begin{equation}\label{Eq_alpha0_alpha1}
\alpha_0=\begin{cases} y-\sqrt{y(x+y-2)}, & \text{if }x\leq 2, \\ 0, & \text{if }x\geq 2, \end{cases} \quad\text{and}\quad \alpha_1:=\begin{cases} y+\sqrt{y(x+y-2)}, & \text{if }y\leq\frac{1}{x}, \\ 1, & \text{if }y\geq\frac{1}{x}. \end{cases}
\end{equation}
For every $\alpha\in(\alpha_0,\alpha_1)$ we now have $x-2+2\alpha>0$ and there exists a gap between
\begin{equation}\label{Eq_delta0_delta1}
\delta_0(\alpha):=\frac{\alpha}{x-2+2\alpha}<\frac{y}{\alpha}=:\delta_1(\alpha).
\end{equation}
Moreover, for every $0<C\leq 1$ we have the two maximization problems

\begin{minipage}{0.3\textwidth}
\begin{center}
\begin{tikzpicture}[scale=3.5]
\fill[black!30] (1.05,0.8)--(1.05,0)--(0.7,0)--(0.35,0.35) arc (-135:-165:1);
\draw[->] (-0.1,0)--(1.15,0) node[anchor=south] {\tiny{$x$}};
\draw[->] (0,-0.1)--(0,0.85) node[anchor=east] {\tiny{$y$}};
\draw[thick] (1.05,0)--(0.7,0)--(0.35,0.35) arc (-135:-165:1);
\draw[dashed] (0.35,0.35)--(1.05,0.35);
\draw[dashed] (0.35,0.355) arc (-135:-90.5:1);
\draw[dashed] (0.35,0.35)--(0.35,0.8);
\draw[dashed] (0.7,0)--(0.7,0.8);
\draw (0.26,0.43) node[anchor=east] {\tiny{$\frac{1}{x}$}};
\draw (0.35,0.02)--(0.35,-0.02) node[anchor=north] {\tiny{$1$}};
\draw (0.7,0.02)--(0.7,-0.02) node[anchor=north] {\tiny{$2$}};
\draw (0.02,0.35)--(-0.02,0.35) node[anchor=east] {\tiny{$1$}};
\end{tikzpicture}
\end{center}
\end{minipage}
\begin{minipage}{0.69\textwidth}
\begin{subequations}\label{Eq_Maximization_problem}
\begin{align}
\sup\limits_{\alpha\in(\alpha_0,\alpha_1)}\alpha\Big(2-\frac{\alpha}{y}\Big)&=\begin{cases} y, & \text{if }y\leq 1, \\ 2-\frac{1}{y}, & \text{if }y\geq 1, \end{cases} \label{Eq_Maximization_problem1} \\
\sup\limits_{\stackrel{\alpha\in(\alpha_0,\alpha_1)}{\mathsmaller{\delta\in(\delta_0(\alpha),\delta_1(\alpha))}}}\alpha\Big(2-\frac{1}{\delta}-\delta\Big)&=\begin{cases} 0, & \text{if }x\geq 1, \\ 2-x-\frac{1}{x}, & \text{if }x\leq 1. \end{cases} \label{Eq_Maximization_problem2}
\end{align}
\end{subequations}
\end{minipage}
\end{lem}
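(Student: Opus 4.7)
The plan is to analyse the quadratic $q(\alpha) := -\alpha^2 + 2y\alpha + (x-2)y$ as a function of $\alpha$. It is a downward parabola with vertex at $\alpha = y$ of height $y(x + y - 2)$ and roots $y \pm \sqrt{y(x+y-2)}$ whenever the discriminant is nonnegative. Every assertion of the lemma reduces to tracking where $q$ is strictly positive on $(0,1)$ together with some elementary one- or two-variable optimisation.

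First I would establish the equivalence \eqref{Eq_Equivalence} by splitting on whether the vertex lies inside $(0,1)$. If $y \le 1$ then the supremum of $q$ over $(0,1)$ is attained at the vertex and equals $y(x+y-2)$, so positivity is equivalent to $y > 2-x$; if $y \ge 1$ then the parabola is increasing on $(0,1)$ and $\sup q = xy - 1$, so positivity is equivalent to $y > 1/x$. Matching this against $x \le 1$ (which forces $y > 1/x \ge 1$ and hence the second branch) and $x \ge 1$ (where $y > 2-x$ is the binding constraint) yields the dichotomy and, en passant, the bound $x + y > 2$. The identity \eqref{Eq_Interval_of_positivity} is then the intersection of $(0,1)$ with the interval between the two roots, and \eqref{Eq_alpha0_alpha1} follows by checking that the smaller root exceeds $0$ iff $x \le 2$ and the larger root is at most $1$ iff $y \le 1/x$.

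Next I would verify that $x - 2 + 2\alpha > 0$ for every $\alpha \in (\alpha_0, \alpha_1)$. This is trivial when $x \ge 2$ since then $\alpha_0 = 0$; when $x \le 2$ I would use the identity
\begin{equation*}
x - 2 + 2\alpha_0 = \bigl(\sqrt{x+y-2} - \sqrt{y}\bigr)^2 \ge 0,
\end{equation*}
and the strict monotonicity of $\alpha \mapsto x - 2 + 2\alpha$. The gap \eqref{Eq_delta0_delta1} is then immediate: after clearing the positive denominators, $\delta_0(\alpha) < \delta_1(\alpha)$ is exactly $\alpha^2 < 2\alpha y + (x-2)y$, i.e.\ $q(\alpha) > 0$.

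For the two maximisation problems in \eqref{Eq_Maximization_problem}, the key is to recognise the shape of each objective. In \eqref{Eq_Maximization_problem1}, the map $\alpha \mapsto \alpha(2 - \alpha/y)$ is a downward parabola with apex at $\alpha = y$ and apex value $y$; I would split on $y \le 1$ (apex inside $(\alpha_0, \alpha_1)$, supremum equal to $y$) and $y > 1$ (the standing assumption forces $\alpha_1 = 1$ and the objective is increasing on $(\alpha_0, 1)$, giving supremum $2 - 1/y$ as $\alpha \to 1$). For \eqref{Eq_Maximization_problem2} I would exploit that $h(\delta) := 2 - 1/\delta - \delta$ is nonpositive with unique maximum $0$ at $\delta = 1$: if $x \ge 1$, the feasible set of $\alpha$'s with $\delta_0(\alpha) < 1 < \delta_1(\alpha)$ translates to $2 - x < \alpha < y$, which has non-empty intersection with $(\alpha_0, \alpha_1)$ by the equivalence, producing supremum $0$; if $x \le 1$ one always has $\delta_0(\alpha) \ge 1$, so the optimal $\delta$ is $\delta_0(\alpha)$ itself, and substitution gives
\begin{equation*}
\alpha\, h(\delta_0(\alpha)) = (2-x) - \frac{\alpha^2}{x-2+2\alpha},
\end{equation*}
whose derivative in $\alpha$ is a positive multiple of $-(x-2+\alpha)$, itself positive on $(0,1)$ when $x \le 1$. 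The supremum is therefore approached as $\alpha \to \alpha_1 = 1$ and equals $2 - x - 1/x$. The main obstacle I expect is not any individual calculation but the careful bookkeeping of the case splits needed to reconcile the four endpoints $\alpha_0, \alpha_1, \delta_0, \delta_1$ with the various regimes of $x$ and $y$; each underlying computation is elementary.
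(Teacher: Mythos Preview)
Your proposal is correct and follows essentially the same route as the paper: analyse the downward parabola $p(\alpha)=(x-2)y+2\alpha y-\alpha^2$, read off the equivalence and the interval $(\alpha_0,\alpha_1)$ from its vertex and roots, and then treat the two optimisation problems by elementary one-variable calculus. The only cosmetic difference is in \eqref{Eq_Maximization_problem2}, where the paper first computes the inner supremum over $\delta$ as a three-piece function of $\alpha$ and then maximises, while you split immediately on $x\ge 1$ versus $x\le 1$; both arguments work (note only that at the borderline $x=1$ your ``non-empty intersection'' claim fails because $2-x=1=\alpha_1$, but your $x\le 1$ branch already covers that point and yields the same value $0$).
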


\begin{proof}
Let us define the polynomial
\begin{equation}\label{Eq_Equivalence_11}
p(\alpha):=(x-2)y+2\alpha y-\alpha^2,\qquad\alpha\in(0,1).
\end{equation}
In the \textit{first step} we calculate the maximal value $\sup_{\alpha\in(0,1)}p(\alpha)$. To do so, we first note, that the (possible) zero of the derivative
\begin{equation}\label{Eq_Equivalence_1}
p'(\alpha)=2(y-\alpha),
\end{equation}
is given by $\alpha=y$. However, since we only consider the polynomial on the interval $\alpha\in(0,1)$, the maximum value is attained at $\alpha=y$ if $y<1$ and at the right endpoint $\alpha\to 1$ if $y\geq 1$. This then gives the maximal value
\begin{equation}\label{Eq_Equivalence_5}
\sup\limits_{\alpha\in(0,1)}p(\alpha)=\begin{cases} p(y), & \text{if }y<1, \\ p(1), & \text{if }y\geq 1, \end{cases}=\begin{cases} y(x+y-2), & \text{if }y\leq 1, \\ xy-1, & \text{if }y\geq 1. \end{cases}
\end{equation}
In the \textit{second step} we will prove the equivalence \eqref{Eq_Equivalence}. Using the already calculated maximal value \eqref{Eq_Equivalence_5} allows us to rewrite \eqref{Eq_Equivalence} as
\begin{equation}\label{Eq_Equivalence_2}
0<\begin{cases} y(x+y-2), & \text{if }y\leq 1, \\ xy-1, & \text{if }y\geq 1, \end{cases} \Leftrightarrow y>\begin{cases} \frac{1}{x}, & \text{if }x\leq 1, \\ 2-x, & \text{if }x\geq 1. \end{cases}
\end{equation}
The proof of this equivalence will be done for the two cases of the left hand side separately. \medskip

$\circ$\;\;If $y\leq 1$, the equivalence looks like
\begin{equation*}
0<x+y-2\Leftrightarrow y>\begin{cases} \frac{1}{x}, & \text{if }x\leq 1, \\ 2-x, & \text{if }x\geq 1. \end{cases}
\end{equation*}
It is true, because for $x\geq 1$ both inequalities are the same, and for $x\leq 1$ both inequalities are never satisfied. \medskip

$\circ$\;\;If $y>1$, the equivalence \eqref{Eq_Equivalence_2} looks like
\begin{equation*}
0<xy-1\Leftrightarrow y>\begin{cases} \frac{1}{x}, & \text{if }x\leq 1, \\ 2-x, & \text{if }x\geq 1. \end{cases}
\end{equation*}
It is true, because for $x\leq 1$ both inequalities are the same, and for $x\geq 1$ both inequalities are always satisfied. \medskip

In the \textit{third step} we prove \eqref{Eq_Interval_of_positivity}. The zeros of $p$, if we forget about the constraint $\alpha\in(0,1)$ for the moment, are given by
\begin{equation}\label{Eq_Equivalence_4}
\alpha_\pm=y\pm\sqrt{y(x+y-2)}.
\end{equation}
Since we assumed that \eqref{Eq_Equivalence} is satisfied, there exists at least one point $\alpha\in(0,1)$, for which $p(\alpha)>0$. Since moreover $\lim_{\alpha\to\pm\infty}p(\alpha)=-\infty$, the zeros \eqref{Eq_Equivalence_4} have to be real valued, i.e. $x+y>2$ is satisfied and the interval where $p$ is positive is exactly between those zeros,
\begin{equation*}
\Set{\alpha\in\mathbb{R} | p(\alpha)>0}=(\alpha_-,\alpha_+).
\end{equation*}
Restricting now again to the interval $\alpha\in(0,1)$, we get
\begin{equation*}
\Set{\alpha\in(0,1) | p(\alpha)>0}=\big(\max\{0,\alpha_-\},\min\{1,\alpha_+\}\big).
\end{equation*}
It is left to check when we have $\alpha_-\geq 0$ and $\alpha_+\leq 1$ respectively. The condition $\alpha_-\geq 0$ is equivalent to
\begin{equation}\label{Eq_Equivalence_8}
\alpha_-\geq 0\quad\Leftrightarrow\quad y\geq\sqrt{y(x+y-2)}\quad\Leftrightarrow\quad\sqrt{y}\geq\sqrt{x+y-2}\quad\Leftrightarrow\quad x\leq 2,
\end{equation}
and the condition $\alpha_+\leq 1$ is equivalent to
\begin{align}
\alpha_+\leq 1\quad&\Leftrightarrow\quad y+\sqrt{y(x+y-2)}\leq 1 \notag \\
&\Leftrightarrow\quad 1-y\geq 0\quad\text{and}\quad y(x+y-2)\leq(1-y)^2 \notag \\
&\Leftrightarrow\quad y\leq 1\quad\text{and}\quad xy\leq 1 \notag \\
&\Leftrightarrow\quad y\leq\frac{1}{x}, \label{Eq_Equivalence_9}
\end{align}
where in the last equivalence we used that the combination $y>1$ and $y\leq\frac{1}{x}$ is not allowed, since in this case we have $1<y\leq\frac{1}{x}$, consequently $x<1$, which is a contradiction to the assumed \eqref{Eq_Equivalence_2}, see also the shaded region in \eqref{Eq_Maximization_problem}. The two equivalences \eqref{Eq_Equivalence_8} and \eqref{Eq_Equivalence_9} now  show that $\max\{0,\alpha_-\}=\alpha_0$ and $\min\{1,\alpha_+\}=\alpha_1$ coincide with the values in \eqref{Eq_alpha0_alpha1}. Hence \eqref{Eq_Interval_of_positivity} is proven. \medskip

In the \textit{fourth step}, we will solve the first maximization problem \eqref{Eq_Maximization_problem1}, i.e. we will maximize
\begin{equation*}
f(\alpha):=\alpha\Big(2-\frac{\alpha}{y}\Big),\qquad\alpha\in(\alpha_0,\alpha_1).
\end{equation*}
The zero of the derivative
\begin{equation}\label{Eq_Equivalence_10}
f'(\alpha)=2\Big(1-\frac{\alpha}{y}\Big),
\end{equation}
is given by $\alpha=y$. We now have to check if this value $y$ is contained in the interval $(\alpha_0,\alpha_1)$ of allowed values. First,
\begin{equation}\label{Eq_Equivalence_6}
y>\begin{cases} y-\sqrt{y(x+y-2)}, & \text{if }x\leq 2, \\ 0, & \text{if }x\geq 2, \end{cases}=\alpha_0,\qquad\text{is always satisfied.}
\end{equation}
The second condition $y<\alpha_1$ can be characterized by
\begin{equation}\label{Eq_Equivalence_7}
\begin{split}
y<\alpha_1\quad&\Leftrightarrow\quad y<\begin{cases} y+\sqrt{y(x+y-2)}, & \text{if }y\leq\frac{1}{x} \\ 1, & \text{if }y\geq\frac{1}{x} \end{cases} \\
&\Leftrightarrow\quad\Big(y\leq\frac{1}{x}\Big)\quad\text{or}\quad\Big(y\geq\frac{1}{x}\quad\text{and}\quad y<1\Big) \\
&\Leftrightarrow\quad y<1,
\end{split}
\end{equation}
where in the last equivalence we used that the combination $y>1$ and $y\leq\frac{1}{x}$ is not allowed, similar to the argument below equation \eqref{Eq_Equivalence_9}. Hence, the maximal value of $f$ is attained at $\alpha=y$ if $y<1$ and at the right endpoint $\alpha\to\alpha_1=1$ if $y\geq 1$. This means, we get
\begin{equation*}
\sup\limits_{\alpha\in(\alpha_0,\alpha_1)}f(\alpha)=\begin{cases} f(y), & \text{if }y<1, \\ f(1), & \text{if }y\geq 1, \end{cases}=\begin{cases} y, & \text{if }y\leq 1, \\ 2-\frac{1}{y}, & \text{if }y\geq 1. \end{cases}
\end{equation*}
In the \textit{fifth step} we first note that for every $\alpha\in(\alpha_0,\alpha_1)$, there holds
\begin{equation*}
x-2+2\alpha>x-2+2\alpha_0=\begin{cases} x-2+2y-2\sqrt{y(x+y-2)}, & \text{if }x\leq 2, \\ x-2, & \text{if }x\geq 2, \end{cases} \geq 0,
\end{equation*}
where the right hand side is nonnegative because $a+b-2\sqrt{ab}\geq 0$, for every $a,b\geq 0$. Moreover, there holds
\begin{equation*}
\delta_0(\alpha):=\frac{\alpha}{x-2+2\alpha}<\frac{y}{\alpha}=:\delta_1(\alpha),\qquad\alpha\in(\alpha_0,\alpha_1),
\end{equation*}
which is a simple rearrangement of the inequality $(x-2)y+2\alpha y-\alpha^2>0$ in \eqref{Eq_Interval_of_positivity}. Now, we will solve the second maximization problem \eqref{Eq_Maximization_problem2}, i.e. we want to maximize the function
\begin{equation*}
g(\alpha,\delta):=\alpha\Big(2-\frac{1}{\delta}-\delta\Big),\qquad\alpha\in(\alpha_0,\alpha_1),\,\delta\in(\delta_0(\alpha),\delta_1(\alpha)).
\end{equation*}
First, we will fix $\alpha\in(\alpha_0,\alpha_1)$ and maximize $g(\alpha,\cdot\,)$ with respect to $\delta$. The only positive zero of the partial derivative
\begin{equation*}
\frac{\partial g}{\partial\delta}(\alpha,\delta)=\alpha\Big(\frac{1}{\delta^2}-1\Big),
\end{equation*}
is given by $\delta=1$. The maximum value of $g(\alpha,\cdot\,)$ now depends on the location of this zero with respect to the interval $(\delta_0(\alpha),\delta_1(\alpha))$ of allowed $\delta$-values. If $1\leq\delta_0(\alpha)$, the maximum is attained the left endpoint $\delta\to\delta_0(\alpha)^+$, if $1\in(\delta_0(\alpha),\delta_1(\alpha))$ is an allowed value, the maximum is attained at exactly this point $\delta=1$, and if $1\geq\delta_1(\alpha)$, the maximum value is attained at the right endpoint $\delta\to\delta_1(\alpha)$. I.e., we get
\begin{align*}
h(\alpha):=\sup\limits_{\delta\in(\delta_0(\alpha),\delta_1(\alpha))}g(\alpha,\delta)&=\begin{cases} g(\alpha,\delta_0(\alpha)^+) & \text{if }1\leq\delta_0(\alpha), \\ g(\alpha,\frac{1}{C}), & \text{if }\delta_0(\alpha)<1<\delta_1(\alpha), \\ g(\alpha,\delta_1(\alpha)), & \text{if }1\geq\delta_1(\alpha), \end{cases} \\
&=\begin{cases} 2-x-\frac{\alpha^2}{x-2+2\alpha}, & \text{if }\alpha\leq 2-x, \\ 0, & \text{if }2-x\leq\alpha\leq y, \\ 2\alpha-\frac{\alpha^2}{y}-y, & \text{if }\alpha\geq y. \end{cases}
\end{align*}
Note, that we were allowed to replace the $<$-inequalities by $\leq$-inequalities in the second case since $h(\alpha)$ is continuous at $\alpha=2-x$ and $\alpha=y$. Next, we want to maximize this function $h$ with respect to the parameter $\alpha$. Therefore, we calculate its derivative
\begin{equation}\label{Eq_Equivalence_3}
h'(\alpha)=\begin{cases} \frac{2\alpha(2-x-\alpha)}{(x-2+2\alpha)^2}, & \text{if }\alpha\leq 2-x, \\ 0, & \text{if }2-x\leq\alpha\leq y, \\ 2(1-\frac{\alpha}{y}), & \text{if }\alpha\geq y. \end{cases}
\end{equation}
In the first two cases $\alpha\leq 2-x$ and $2-x\leq\alpha\leq y$ in \eqref{Eq_Equivalence_3}, we obviously have a nonnegative derivative $h'(\alpha)\geq 0$. Only in the third case $\alpha\geq y$, there exists the possible zero $\alpha_{\max}=y$ of the derivative. It is already calculated in \eqref{Eq_Equivalence_6} and \eqref{Eq_Equivalence_7}, that
\begin{equation*}
\alpha_{\max}\in(\alpha_0,\alpha_1)\Leftrightarrow y<1\qquad\text{and}\qquad \alpha_{\max}\geq\alpha_1\Leftrightarrow y\geq 1.
\end{equation*}
Hence, we conclude that for $y\geq 1$ the function $h(\alpha)$ is monotone increasing on the whole interval $(\alpha_0,\alpha_1)$, which means that its supremum is attained at the right endpoint $\alpha\to\alpha_1=1$. In the case $y<1$ on the other hand, the maximal value is attained at the interior point $\alpha=\alpha_{\max}=y$. The solution of the maximization problem is then given by
\begin{align*}
\sup\limits_{\alpha\in(\alpha_0,\alpha_1)}h(\alpha)&=\begin{cases} h(1), & \text{if }y\geq 1, \\ h(y), & \text{if }y<1, \end{cases}=\begin{cases} 2-x-\frac{1}{x}, & \text{if }y\geq 1\;\;\&\;\;x\leq 1, \\ 0, & \text{if }y\geq 1\;\;\&\;\;x\geq 1, \\ 2-\frac{1}{y}-y, & \text{if }y\geq 1\;\;\&\;\;1\geq y, \\ 0, & \text{if }y<1, \end{cases} \\
&=\begin{cases} 2-x-\frac{1}{x}, & \text{if }x\leq 1, \\ 0, & \text{if }x\geq 1. \end{cases} \qedhere
\end{align*}
\end{proof}

\section{Concluding remarks}\label{sec_Concluding_remarks}

We conclude this paper with some considerations that contextualize the estimates we have derived within the framework of the $H^\infty$-functional calculus, which manifests in various settings. In the realm of hypercomplex settings, depending on the spectral theory under consideration, this calculus takes on several distinct formulations and the estimates for sectorial and bi-sectorial operators are crucial for establishing $H^\infty$-functional calculi. \medskip

We recall that the $H^\infty$-functional calculus for complex operators on Banach spaces, introduced by A. McIntosh \cite{McI1} and further investigated in \cite{MC10,MC97,MC06,MC98}, holds significant relevance across multiple disciplines. For more information and the developments one can consult the books \cite{Haase,HYTONBOOK1,HYTONBOOK2}. \medskip

Regarding the spectral theory on the $S$-spectrum \cite{ACS2016,AlpayColSab2020,FJBOOK,CGK,ColomboSabadiniStruppa2011,ColSab2006,JONAMEM,JONADIRECT}, which has its roots in the theory of slice hyperholomorphic functions, the $H^\infty$-functional calculus has been considered in the papers \cite{ACQS2016,CGdiffusion2018}. \medskip

The spectral theory concerning the monogenic spectrum, introduced in \cite{JM}, relies on monogenic functions \cite{DSS}. Further exploration of this topic, including the $H^\infty$-functional calculus within this framework can be found in the books \cite{JBOOK,TAOBOOK}. \medskip

In recent times, a novel branch of spectral theory concerning the $S$-spectrum  has emerged in connection with the factorizations of the second mapping in the Fueter-Sce extension theorem, see \cite{ColSabStrupSce,Fueter,TaoQian1,Sce}. This branch, referred to as the fine structures on the $S$-spectrum, revolves around sets of functions characterized by integral representations, from which the corresponding functional calculi are defined. The development of this new theory and its associated functional calculi is elaborated in the papers \cite{BANJOUR,CDPS1,Fivedim,Polyf1,Polyf2}, while recent investigations into the $H^\infty$-functional calculus linked with the functional calculi of the fine structures are studied in \cite{MILANJPETER,MPS23}.

\section*{Declarations and statements}

{\bf Data availability}. The research in this paper does not imply use of data.

{\bf Conflict of interest}. The authors declare that there is no conflict of interest.

\end{document}